%
%
%
%
\documentclass{amsart}

\usepackage{graphicx}
\usepackage{fancyhdr}
\pagestyle{fancy}
\lhead{}
\fancyhead[CO]{\footnotesize{Minimal right determiners of irreducible morphisms in string algebras}}
\fancyhead[CE]{\footnotesize{Xiaoxing Wu and Zhaoyong Huang}}
\rhead{}
\lfoot{}
\cfoot{}
\cfoot{\thepage}
\newtheorem{theorem}{Theorem}[section]
\newtheorem{lemma}[theorem]{Lemma}

\theoremstyle{definition}
\newtheorem{definition}[theorem]{Definition}
\newtheorem{example}[theorem]{Example}

\theoremstyle{remark}
\newtheorem{remark}[theorem]{Remark}

\theoremstyle{proposition}
\newtheorem{proposition}[theorem]{Proposition}

\theoremstyle{corollary}
\newtheorem{corollary}[theorem]{Corollary}

\numberwithin{equation}{section}



\usepackage{indentfirst}
\usepackage{amsmath,amscd}
\usepackage{diagrams}
\usepackage{booktabs}
\usepackage[all]{xy}
\usepackage{setspace}
\usepackage{rotating}

\newcommand{\pf}{\noindent\begin {proof}}
\newcommand{\epf}{\end{proof}}

\newcommand{\Hom}{\mbox{\rm Hom}}

\def\Im{\mathop{\rm Im}\nolimits}
\def\Ker{\mathop{\rm Ker}\nolimits}
\def\Coker{\mathop{\rm Coker}\nolimits}

\def\mod{\mathop{\rm mod}\nolimits}

\def\add{\mathop{\rm add}\nolimits}

\def\Hom{\mathop{\rm Hom}\nolimits}

\def\lim{\mathop{\underrightarrow{\rm lim}}\nolimits}

\def\rad{\mathop{\rm rad}\nolimits}

\def\rad{\mathop{\rm rad}\nolimits}

\def\rad{\mathop{\rm rad}\nolimits}
\def\Det{\mathop{\rm Det}\nolimits}
\def\Soc{\mathop{\rm Soc}\nolimits}

\begin{document}

\title{Minimal right determiners of irreducible morphisms in string algebras}

\author{Xiaoxing Wu}
\address{}
\curraddr{}
\email{}
\thanks{}

\author{Zhaoyong Huang}
\address{Department of Mathematics, Nanjing University, Nanjing 210093, Jiangsu Province, P.R. China}
\email{wuxiaoxing1990@163.com; huangzy@nju.edu.cn}
\thanks{}

\subjclass[2010]{16G10, 16G70}

\date{}

\dedicatory{}

\keywords{Minimal right determiners, String algebras, Vertex ideals, Irreducible morphisms, Algebras of Dynkin type.}

\begin{abstract}
Let $\Lambda$ be a finite dimensional string algebra over a field with the quiver $Q$ such that the underlying graph of $Q$ 
is a tree, and let $|\Det(\Lambda)|$ be the number of the minimal right determiners
of all irreducible morphisms between indecomposable left $\Lambda$-modules. Then we have
$$|\Det(\Lambda)|=2n-p-q-1,$$
where $n$ is the number of vertices in $Q$, $p=|\{i\mid i$ is a source in $Q$ with two neighbours$\}|$
and $q$ is the number of non-zero vertex ideals of $\Lambda$.
\end{abstract}

\maketitle

\setlength{\baselineskip}{14pt}

\section{Introduction}

In the seminal Philadelphia notes [2], Auslander introduced the notion of morphisms determined by objects,
which generalized that of almost split morphisms. However, until recently, this useful notion and related results
in [2] gained the deserved attention, see [7, 9--13].

Let $\Lambda$ be an artin algebra and $\mod \Lambda$ the category of finitely generated left $\Lambda$-modules.
Ringel corrected in [11, Theorem 1] a formula in [3, Theorem 2.6] for calculating a right determiner
of a morphism in $\mod\Lambda$; and then he reproved in [12, Theorem 3.4] a formula originally in [4]
for calculating the minimal right determiner of a morphism in $\mod \Lambda$. Based on these formulas,
we determined in [13, Theorems 3.13 and 3.15] the minimal right determiners of all irreducible morphisms
between indecomposable modules over a finite dimensional algebra of type $\mathbb{A}_n$.
We use $\Det(\Lambda)$ to denote the set of the minimal right determiners of all irreducible
morphisms between indecomposable modules in $\mod \Lambda$, and use $|\Det(\Lambda)|$ to denote the
cardinality of $\Det(\Lambda)$. In this paper, we continue the previous work mentioned above. We introduce the
notion of vertex ideals and prove the following

\begin{theorem}\label{1.1}
Let $\Lambda$ be a finite dimensional string algebra over a field with the quiver $Q$ such that the underlying graph of $Q$
is a tree. Then we have
$$|\Det(\Lambda)|=2n-p-q-1,$$
where $n$ is the number of vertices in $Q$, $p=|\{i\mid i$ is a source in $Q$ with two neighbours$\}|$
and $q$ is the number of non-zero vertex ideals of $\Lambda$.
\end{theorem}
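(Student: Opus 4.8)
The plan is to combine the explicit combinatorics of string modules over a tree quiver with Ringel's formula for the minimal right determiner, and then to count the resulting modules. First I would record the structural simplifications forced by the hypothesis that the underlying graph of $Q$ is a tree: since $Q$ has no (even unoriented) cycles, $\Lambda$ admits no band modules, so every indecomposable in $\mod\Lambda$ is a string module, and $\Lambda$ is representation-finite with an Auslander--Reiten quiver that can be described purely in terms of strings. In this description the irreducible morphisms between indecomposables split into two families — the irreducible monomorphisms and the irreducible epimorphisms — each obtained from a string by the standard hook/cohook operations. This reduces the statement to a finite, combinatorial problem.

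Next I would apply Ringel's formula [12, Theorem 3.4] to an arbitrary irreducible morphism $f\colon X\to Y$. The minimal right determiner $C(f)$ is read off from the contravariant defect $\delta_f=\Coker\Hom(-,f)$: it is the module supporting the top of $\delta_f$, equivalently the direct sum of the indecomposables $\tau^{-1}Z$ and the projectives dictated by the almost split data around $f$. The key computational step is to evaluate this defect on each irreducible monomorphism and each irreducible epimorphism of string modules, and to show that $C(f)$ is always a single indecomposable module attached to concrete vertex data of $Q$. It is exactly here that the vertex ideals enter: the relations of $\Lambda$ at a vertex $i$ govern whether a hook can be added there, hence whether the candidate determiner attached to $i$ is genuinely realized or instead degenerates onto a projective. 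I would make this precise by proving that a vertex ideal is non-zero precisely when the corresponding determiner collapses.

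With $C(f)$ computed, the remaining task is to count the distinct modules in $\Det(\Lambda)$. I would organize the count vertex by vertex, associating to each vertex $i$ (up to) two a priori determiners — one arising from the irreducible monomorphisms and one from the irreducible epimorphisms carrying data at $i$ — for an upper bound of $2n$. I would then show that all coincidences and degeneracies among these $2n$ candidates are accounted for by exactly three phenomena: at each of the $p$ sources with two neighbours two of the candidates coincide; at each of the $q$ vertices with a non-zero vertex ideal the associated candidate collapses onto one already counted; and there is a single further global coincidence forced by the two extreme (projective/injective) ends of the Auslander--Reiten quiver, which produces the $-1$. Assembling these gives $|\Det(\Lambda)|=2n-p-q-1$.

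The main obstacle I anticipate is the penultimate step: proving that the assignment $f\mapsto C(f)$ has image of exactly the claimed size, i.e.\ pinning down precisely when two irreducible morphisms share a minimal right determiner and when a determiner degenerates. This needs a careful, somewhat delicate case analysis of string modules together with the defect computation, and it is the vertex-ideal bookkeeping that makes the $-q$ correction exact; the hardest part is verifying that no further, unexpected coincidences occur, so that the three listed phenomena are genuinely exhaustive. A useful consistency check throughout is to specialize to type $\mathbb{A}_n$ and recover the counts obtained in [13].
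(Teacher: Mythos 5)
Your overall strategy is the same as the paper's: split the irreducible morphisms into monomorphisms and epimorphisms, compute the minimal right determiner $C(f)$ via Ringel's formula, and reduce the count to a vertex-by-vertex classification in which the vertex ideals record when a projective determiner fails to be realized. Where you diverge is in the anticipated bookkeeping of the three correction terms, and two of your three explanations do not match what actually happens. First, the $-1$ is not a ``global coincidence forced by the projective/injective ends of the Auslander--Reiten quiver'': the determiners of the irreducible epimorphisms are exactly the last terms of the almost split sequences with indecomposable middle term, and these are naturally indexed by the $n-1$ arrows of the tree (Proposition 3.1 together with [13, Corollary 3.7]), so the epimorphism side contributes $n-1$ outright rather than ``$n$ minus one coincidence''. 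Second, at a source with two neighbours, and likewise at a vertex with non-zero vertex ideal, the paper shows that $P(i)$ simply fails to lie in $\Det(\Lambda)$ at all (Lemma 3.8 combined with Lemma 3.7, and the converse directions in Theorem 3.9); nothing coincides with or collapses onto an already-counted determiner. The projective side therefore contributes $n-p-q$, and adding the two counts gives the formula. These are corrections to your accounting rather than to your method; the substantive work you correctly flag as the hard part --- establishing that $P(i)=C(f)$ if and only if $S(i)=\Soc(\Coker f)$, and then running the case analysis over the vertex types (v1)--(v4), where the zero relations at trivalent and tetravalent vertices are precisely what the vertex ideals encode --- is the content of Lemmas 3.4--3.8 and Theorem 3.9, and your plan would have to reproduce it in full.
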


We prove it in Section 3. Note that the proof of Theorem 1.1 is constructive, from which we can determine
the set $\Det(\Lambda)$.
In Section 4, we apply Theorem 1.1 to the case of algebras of Dynkin type; and in particular,
we obtain a unified version of [13, Theorems 3.13 and 3.15]. Finally, we give in Section 5 an example of
non-Dynkin type to illustrate this theorem.

\section{Preliminaries}

Throughout this paper, $\Lambda$ is a finite dimensional algebra over a field $K$
with the quiver $Q$, $\mod \Lambda$ is the category of finitely generated left $\Lambda$-modules
and $\tau$ is the Auslander-Reiten translation.
For an arrow $\alpha$ in $Q$, $s(\alpha)$ and $e(\alpha)$ are the starting and end points of $\alpha$, respectively.
We use $P(i)$, $I(i)$ and $S(i)$ to denote the indecomposable projective, injective
and simple modules corresponding to the vertex $i$, respectively. For a module $M$ in $\mod \Lambda$, we use
$\Soc(M)$ and $\add_{\Lambda}M$ to denote the socle of $M$ and the full subcategory of $\mod \Lambda$ consisting
of direct summands of finite direct sums of copies of $M$, respectively. For a set $S$, we use $|S|$ to
denote the cardinality of $S$.

The original definition of morphisms determined by objects in [2] is based on the notion
of subfunctors determined by objects. However, in the relevant papers, ones prefer the following definition
since it is easier to understand.

\begin{definition}\label{2.1} {\rm ([11, 12])}
For a module $C\in \mod \Lambda$, a morphism $f\in \Hom_{\Lambda}(X,Y)$ is said to be {\bf right determined}
by $C$ (simply {\bf $C$-right determined})
if the following condition is satisfied: given for any $f'\in\Hom_{\Lambda}(X',Y)$ such that $f'\phi$
factors through $f$ for all $\phi\in\Hom_{\Lambda}(C,X')$, then $f'$ factors through $f$; that is,
in the following diagram, if there exists $\phi'\in\Hom_{\Lambda}(C,X)$ such that $f'\phi=f\phi'$,
then there exists $h\in\Hom_{\Lambda}(X',X)$ such that $f'=fh$.
$$\xymatrix{C \ar[r]^{\phi} \ar@{=}[d] & X' \ar[r]^{f'} \ar@{-->}[d]^{h} & Y \ar@{=}[d] \\
C \ar@{-->}[r]^{\phi'} & X \ar[r]^{f} & Y.}$$
In this case, $C$ is called a {\bf right determiner} of $f$.
\end{definition}

\begin{definition}\label{2.2} ([11, p.984])
Given a morphism $f\in\Hom_{\Lambda}(B,C)$ with $B=B_1\oplus B_2$ such that $B_1 \subseteq \Ker f$
and $f|_{B_2}$ is right minimal, then we call $\Ker f|_{B_2}$ the {\bf intrinsic kernel} of $f$.
\end{definition}

\begin{definition}\label{2.3} {\rm ([12, p.418])}
An indecomposable projective module $P\in \mod \Lambda$ is said to {\bf almost factor through}
$f\in \Hom_{\Lambda}(M,N)$ provided that there exists a commutative diagram of the following form
$$\xymatrix{
\rad P \ar[d] \ar[r]^{i}
& P \ar[d]^{h}  \\
M \ar[r]^{f} & N,}
$$
where $i$ is the inclusion map and $\rad P$ is the radical of $P$, such that $\Im h$ is not contained in $\Im f$.
\end{definition}

The following is the determiner formula.

\begin{theorem}\label{2.4}
Let $f$ be a morphism in $\mod \Lambda$. Let $C(f)$ be the direct sum of the indecomposable
modules of the form $\tau^{-1}K$, where $K$ is an indecomposable direct summand of
the intrinsic kernel of $f$ and of the indecomposable projective modules which
almost factor through $f$, one from each isomorphism class. Then we have
\begin{enumerate}
\item[(1)] {\rm ([12, Theorem 3.4], [11, Theorem 2] and [4, Corollary XI.2.3])} $f$ is right $C$-determined if and only if $C(f)\in\add_{\Lambda}C$.
\item[(2)] {\rm ([13, Theorem 2.4(2)])} If $f$ is irreducible, then $C(f)$=$\tau^{-1}\Ker f\oplus(\oplus P_i)$,
where all $P_i$ are pairwise non-isomorphic indecomposable projective modules almost factoring through $f$.
\end{enumerate}
\end{theorem}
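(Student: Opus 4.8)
The plan is to pass to the category of finitely presented functors and to analyse $f$ through its defect functor. For $f\in\Hom_\Lambda(X,Y)$ I would first reduce to the right minimal case: writing $X=B_1\oplus B_2$ with $B_1\subseteq\Ker f$ and $f|_{B_2}$ right minimal as in Definition \ref{2.2}, the summand $B_1$ plays no role in the factorisation property of Definition \ref{2.1}, so $f$ and $f|_{B_2}$ have the same right determiners and the only relevant kernel is the intrinsic kernel $\Ker f|_{B_2}$. Thus I may assume $f$ right minimal and set $K=\Ker f$. Next I would translate Definition \ref{2.1} into the functor category by considering the finitely presented contravariant functor
$$\delta_f=\Coker\big(\Hom_\Lambda(-,X)\xrightarrow{\,\Hom_\Lambda(-,f)\,}\Hom_\Lambda(-,Y)\big),$$
so that a map $g\colon Z\to Y$ factors through $f$ exactly when its class $\overline g$ vanishes in $\delta_f(Z)$. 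The hypothesis ``$g\phi$ factors through $f$ for every $\phi\colon C\to Z$'' then says precisely that $\overline g\in\bigcap_{\phi\colon C\to Z}\Ker\big(\delta_f(\phi)\big)$, so $f$ is right $C$-determined if and only if this intersection vanishes for every $Z$.

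For part (1) I would prove the functorial reduction that $\bigcap_{\phi\colon C\to Z}\Ker\delta_f(\phi)=0$ for all $Z$ holds if and only if, for every simple functor $S_M=\Hom_\Lambda(-,M)/\rad\Hom_\Lambda(-,M)$ occurring in the socle $\Soc\delta_f$, its labelling module $M$ lies in $\add_\Lambda C$; the point is that the generator of such a socle copy is an element $\xi\in\delta_f(M)$ annihilated by all non-isomorphisms into $M$, so it is detected by $\mathrm{id}_M$ exactly when $M\in\add_\Lambda C$. This reduces everything to identifying $\Soc\delta_f$ and its labels, for which I would use the two-step filtration of $\delta_f$ coming from $\Im f\hookrightarrow Y$: the ``cokernel'' part governed by $\Hom_\Lambda(-,Y/\Im f)$ and the ``kernel'' part governed by $\Ext^1_\Lambda(-,K)$. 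For a projective label $P$, one checks that $S_P$ occurs in $\delta_f$ precisely when there is $h\colon P\to Y$ with $h|_{\rad P}$ factoring through $f$ but $\Im h\not\subseteq\Im f$, i.e.\ exactly when $P$ almost factors through $f$ in the sense of Definition \ref{2.3}, the detecting module being $P$ itself. For a non-projective label, Auslander--Reiten duality $\Ext^1_\Lambda(Z,K)\cong D\underline{\Hom}_\Lambda(\tau^{-1}K,Z)$ exhibits $\Ext^1_\Lambda(-,K)$ as the injective hull of the simple functors $S_{\tau^{-1}K'}$ attached to the indecomposable summands $K'$ of $K$, so the non-projective socle constituents are exactly these $S_{\tau^{-1}K'}$, detected by $\tau^{-1}K'$ (injective summands of $K$ drop out since $\tau^{-1}$ annihilates them, consistently with $\Ext^1_\Lambda(-,K)=0$ there). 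Assembling both cases, $\Soc\delta_f$ is detected by $C$ if and only if $\big(\bigoplus_{K'}\tau^{-1}K'\big)\oplus\big(\bigoplus_i P_i\big)=C(f)\in\add_\Lambda C$, which is part (1).

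For part (2) I would specialise to an irreducible $f\colon M\to N$ between indecomposables. Since $M$ is indecomposable and $f\neq0$, any direct summand $B_1\subseteq\Ker f$ of $M$ is either $0$ or $M$, and $B_1=M$ would force $f=0$; hence $B_1=0$, so $f$ is right minimal and its intrinsic kernel is the full kernel $\Ker f$. The formula of part (1) then specialises, using additivity of $\tau^{-1}$, to $C(f)=\tau^{-1}\Ker f\oplus(\oplus P_i)$ with the $P_i$ the pairwise non-isomorphic indecomposable projectives almost factoring through $f$, which is exactly the asserted formula.

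The main obstacle I expect is the precise determination of $\Soc\delta_f$ through the filtration by $\Hom_\Lambda(-,Y/\Im f)$ and $\Ext^1_\Lambda(-,K)$, since a socle does not in general split along a subobject--quotient decomposition; in particular one must verify that the projective socle constituents are governed \emph{exactly} by Definition \ref{2.3} and are not absorbed into the extension. The Auslander--Reiten duality used for the non-projective part also degenerates on projectives, where $\tau^{-1}$ vanishes, so the projective contributions must be handled separately through the presentation $\rad P\hookrightarrow P$. A secondary technical point is the functorial reduction itself: it requires that $\delta_f$ have essential socle, so that detecting the socle suffices to detect every nonzero element, and hence that $C(f)$ is a genuinely finite module; establishing this finiteness over an arbitrary artin algebra is the step demanding the most care.
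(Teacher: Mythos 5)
First, a point of order: the paper does not prove Theorem \ref{2.4} at all --- it is quoted background, with part (1) attributed to [12, Theorem 3.4], [11, Theorem 2] and [4, Corollary XI.2.3], and part (2) to the authors' earlier paper [13]. So your proposal can only be measured against the cited proofs. Your route --- the defect functor $\delta_f=\Coker\Hom_\Lambda(-,f)$, reduction of right $C$-determinedness to detecting $\Soc\delta_f$, the two-layer filtration by the image in $\Hom_\Lambda(-,\Coker f)$ and the connecting-map image inside $\Ext^1_\Lambda(-,K)$, and Auslander--Reiten duality for the non-projective labels --- is essentially Auslander's original functor-theoretic argument, which underlies [4, Corollary XI.2.3]; Ringel's proofs in [11, 12], the paper's primary citations, are more module-theoretic (explicit pullback constructions and the notion of almost factoring through), but the content is the same. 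Your identification of the projective socle labels is correct as stated, since for $P$ projective a map $h\colon P\to Y$ factors through $f$ if and only if $\Im h\subseteq\Im f$, so nonvanishing of $\overline h$ in $\delta_f(P)$ together with radical-annihilation is literally Definition \ref{2.3}. Your derivation of (2) from (1) --- $M$ indecomposable and $f\neq0$ force $B_1=0$, hence $f$ is right minimal and the intrinsic kernel is $\Ker f$ --- is complete and is exactly the specialization made in [13].

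There remain two genuine gaps, one of which you mislocate. The sufficiency direction of your socle reduction needs $\Soc\delta_f$ to be essential: for every $Z$ and every nonzero $\overline g\in\delta_f(Z)$ you must produce $\phi\colon C(f)\to Z$ with $\delta_f(\phi)(\overline g)\neq0$. This does not follow from finiteness of $C(f)$ --- which is trivial anyway, since $\Lambda$ has only finitely many indecomposable projectives and the intrinsic kernel only finitely many indecomposable summands --- nor from finite length of $\delta_f$, which fails in general: for $f=0\colon 0\to Y$ with $Y$ an injective cogenerator, $\delta_f=\Hom_\Lambda(-,Y)$ is nonzero on every indecomposable, hence of infinite length over a representation-infinite algebra, yet the theorem still holds there with $C(f)$ the projective cover of $\Soc Y$. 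The standard repair is explicit rather than finiteness-based: given $g\colon Z\to Y$ not factoring through $f$, choose $Z'\subseteq Z$ minimal with $g|_{Z'}$ not factoring through $f$; then either a projective cover of $Z'$ exhibits a projective almost factoring through $f$, or one lands in the kernel layer and an AR-sequence pullback produces a map from some $\tau^{-1}K'$. Second, in the kernel layer you assert that the non-projective socle constituents of $\delta_f$ are exactly the $S_{\tau^{-1}K'}$, but the filtration only gives that they embed in $\Ext^1_\Lambda(-,K)$; to see that each $S_{\tau^{-1}K'}$ actually occurs in $\delta_f$ one must check that the almost split class in $\Ext^1_\Lambda(\tau^{-1}K',K')$ dies in $\Ext^1_\Lambda(\tau^{-1}K',X)$, and this is precisely where right minimality enters: the inclusion $K'\to X$ is not a split monomorphism, hence factors through the left almost split map out of $K'$, so the pushout splits. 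You flag the first difficulty but attribute it to the wrong cause; with these two points supplied, your plan does carry through and reproduces the cited proofs.
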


The first assertion in this theorem suggests to call $C(f)$ the {\bf minimal right determiner} of $f$ ([11, 12]).
We use $\Det(\Lambda)$ to denote the set of the minimal right determiners of all irreducible
morphisms between indecomposable modules in $\mod \Lambda$.

We use $Q_0:=\{1,\cdots,n\}$ and $Q_1$ to denote the set of vertices and the set of arrows in $Q$, respectively.

\begin{definition} {\rm ([5, p.534] and [6, p.157])}
Let $\Lambda=KQ/I$ with $I$ an ideal of $KQ$. Then $\Lambda$ is called a \textbf{special biserial algebra} provided the following
conditions are satisfied.
\begin{enumerate}
\item[(1)] For each $i\in Q_0$, we have $|\{\alpha\in Q_1\mid s(\alpha)=i\}| \leq 2$ and $|\{\alpha\in Q_1\mid e(\alpha)=i\}|\leq 2$.
\item[(2)] For $\alpha,\beta,\gamma\in Q_1$ with $e(\alpha)=e(\beta)=s(\gamma)$ and $\alpha \neq \beta$,
we have $\gamma\alpha \in I$ or $\gamma\beta \in I$.
\item[(3)] For $\alpha,\beta,\gamma\in Q_1$ with $s(\alpha)=s(\beta)=e(\gamma)$ and $\alpha \neq \beta$,
we have $\alpha\gamma \in I$ or $\beta\gamma \in I$.
\end{enumerate}
A special biserial algebra is called a \textbf{string algebra} if the following condition is satisfied.
\begin{enumerate}
\item[(4)] The ideal $I$ can be generated by zero relations.
\end{enumerate}
\end{definition}

\section{Representation-finite string algebras}

In this section, $\Lambda$ is a string algebra with the quiver $Q$ such that the underlying graph of $Q$
is a tree. Then either $\Lambda=KQ$ or $\Lambda=KQ/I$ with $I$ an admissible ideal of $KQ$.
By the definition of string algebras, the former case occurs only if $\Lambda$ is of type $\mathbb{A}_n$.
In either case, $\Lambda$ is of finite representation type ([6, p.161, Theorem] or [8, Theorem 1.2(2)]), and we may assume
that there are $n$ vertices and $n-1$
arrows in $Q$. All morphisms considered are irreducible morphisms between indecomposable modules in $\mod \Lambda$.

By [6, p.147], there are only two types of almost split sequences in
$\mod\Lambda$, that is, the middle term in an almost split sequence is indecomposable or is a direct
sum of two indecomposable modules:
$$0\to L \to M \to N \to 0\eqno{(3.1)}$$
and
$$0\to L \to M_1\oplus M_2 \to N \to 0.\eqno{(3.2)}$$

\begin{proposition}\label{3.1} {\rm([6, p.174, Corollary])}
\begin{enumerate}
\item[(1)] The only almost split sequences in $\mod \Lambda$ of type (3.1) are those of the form
$$0 \to U(\beta) \to N(\beta) \to V(\beta) \to 0$$
with $\beta$ an arrow in $Q$.
\item[(2)]
The number of almost split sequences of type (3.1) in $\mod \Lambda$ is $n-1$.
\end{enumerate}
\end{proposition}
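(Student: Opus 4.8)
The plan is to deduce both parts from Butler--Ringel's combinatorial classification of almost split sequences over a string algebra, specialized to the tree case. The first step is to observe that, since the underlying graph of $Q$ is a tree, $Q$ has no cyclic reduced walk, so $\Lambda$ has no band modules; by [6] every indecomposable object of $\mod\Lambda$ is then a string module $M(w)$, and (as already noted) $\Lambda$ is representation-finite. Hence every almost split sequence is a sequence of string modules, and proving (1) amounts to deciding, for each such sequence, when its middle term is indecomposable.

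For this I would use the explicit shape of these sequences: for a string $w$ the almost split sequence ending at $M(w)$ has middle term $M(w_{1})\oplus M(w_{2})$, where $w_{1}$ and $w_{2}$ are the strings obtained by the canonical operation (adding a hook, respectively deleting a cohook) at the two ends of $w$, a summand being present exactly when the corresponding operation is defined and blocked precisely by a zero relation or by an end of $Q$. Thus the middle term is indecomposable, that is, the sequence is of type (3.1), if and only if exactly one of the two end operations is defined. The core of the argument is to show that this ``one end blocked'' condition selects, for each arrow $\beta$ of $Q$, a unique string $N(\beta)=M(w_{\beta})$ in which $\beta$ occurs as the distinguished letter separating a maximal substring on its source side from one on its target side; reading off this factorization produces a submodule $U(\beta)$ and a quotient $V(\beta)$ and exhibits the sequence in the form $0\to U(\beta)\to N(\beta)\to V(\beta)\to 0$. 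Conversely every type (3.1) sequence is shown to arise from exactly one arrow in this way, so $\beta\mapsto\bigl(0\to U(\beta)\to N(\beta)\to V(\beta)\to 0\bigr)$ is a bijection from the arrows of $Q$ onto the almost split sequences of type (3.1); this gives (1).

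Granting (1), assertion (2) is immediate, since a tree on $n$ vertices has exactly $n-1$ edges and hence $Q$ has exactly $n-1$ arrows. The main obstacle is the combinatorial bookkeeping compressed into the previous paragraph: one must check that ``exactly one end operation is defined'' is genuinely equivalent to $w$ being the canonical string $N(\beta)$ of a single arrow $\beta$, and that distinct arrows give non-isomorphic strings. This is precisely where the string-algebra axioms (which force hooks and cohooks to terminate) and the tree hypothesis (which rules out bands and makes the strings finite on each side) enter, and it is carried out in the classification underlying [6].
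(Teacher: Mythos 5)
The paper offers no proof of Proposition 3.1: it is quoted directly from Butler--Ringel [6, p.174, Corollary], and your outline (no bands since $Q$ is a tree, the hook/cohook description of the middle terms, the bijection $\beta\mapsto(0\to U(\beta)\to N(\beta)\to V(\beta)\to 0)$, and then $n-1$ arrows for a tree on $n$ vertices) is a faithful summary of the argument in that reference, so it is consistent with what the paper does. One small imprecision: the middle term does not drop a summand because an end operation is ``undefined'' --- when adding a hook is impossible one always passes to deleting a cohook, and the summand vanishes exactly when that deletion leaves the empty string, i.e.\ the zero module; but since you, like the authors, ultimately defer the combinatorial verification to [6], this does not affect the correctness of the proposal.
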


We give the following useful remark.

\begin{remark}\label{3.2}
In [13] the algebra $\Lambda$ is assumed to be of type $\mathbb{A}_n$. We point out that
all results from 3.1 to 3.8 in [13] hold true in the setting of this paper even without changing the proofs there.
To avoid repeating, we will not list these results in details here, but cite them directly when needed.
\end{remark}

Note that an irreducible morphism is either a proper monomorphism or a proper
epimorphism. By [13, Corollary 3.7], we have
\begin{align*}
&\ \ \ \ \ \{C(f)\mid f\ \text{is an epic irreducible morphism in}\ \mod \Lambda\}\\
& =\{\text{the last terms in almost split sequences of type (3.1) as in Proposition 3.1}\},
\end{align*}
and its cardinality is $n-1$. So, in the following, we only need to determine the minimal right determiners
of all irreducible monomorphisms.

For $\alpha\in Q_1$, recall from [1, p.43] that $s(\alpha)$ and $e(\alpha)$ are called the {\bf neighbours} of $e(\alpha)$ and $s(\alpha)$,
respectively. By the definition of string algebras, we can give a complete classification of the vertices in $Q$ as follows.

\begin{enumerate}
\item[(v1)] The vertex $i_1$ with a unique neighbour:
$$\xymatrix{i_1 \ar[r] & \cdots &&&&\ \ \ {\rm (v1.1)}}$$
and
$$\xymatrix{i_1 & \cdots. \ar[l] &&&&\ \ \ {\rm (v1.2)}}$$
\item[(v2)] The vertex $i_2$ with two neighbours:
$$\xymatrix{\cdots & i_2 \ar[l] \ar[r] & \cdots, &&& {\rm (v2.1)}}$$
$$\xymatrix{\cdots \ar[r] & i_2 & \cdots \ar[l] &&&\ \ {\rm (v2.2)}}$$
and
$$\xymatrix{\cdots \ar[r] & i_2 \ar[r]& \cdots. &&&\ \ {\rm (v2.3)}}$$
\item[(v3)] The vertex $i_3$ with three neighbours:
$$\xymatrix{\cdots \ar@{.}[dr] &&\\
& j_1 \ar[dr]^{\alpha_1} &&\\
& & i_3 \ar[r]^{\alpha_3} & \cdots && {\rm (v3.1)}\\
& j_2  \ar[ur]_{\alpha_2} &&\\
\cdots \ar@{.}[ur] &&\\}$$
such that at least one in $\{\alpha_3\alpha_1,\; \alpha_3\alpha_2\}$ is in $I$; and
$$\xymatrix{\cdots \ar@{.}[dr] &&\\
& j_1 &&\\
& & i_3 \ar[ul]_{\alpha_1}\ar[dl]^{\alpha_2} & \cdots\ar[l]^{\alpha_3} && \ \ {\rm (v3.2)}\\
& j_2  &&\\
\cdots \ar@{.}[ur] &&\\}$$
such that at least one in $\{\alpha_1\alpha_3,\; \alpha_2\alpha_3\}$ is in $I$.
\item[(v4)] The vertex $i_4$ with four neighbours:
$$\xymatrix{
\cdots \ar@{.}[dr] &&&& \cdots \ar@{.}[dl] \\
& j_1  \ar[dr]^{\alpha_1} & & j_3 \\
& & i_4 \ar[ur]^{\alpha_3} \ar[dr]_{\alpha_4}\\
& j_2 \ar[ur]_{\alpha_2} & & j_4 \\
\cdots \ar@{.}[ur] &&&& \cdots \ar@{.}[ul] \\
}$$
such that at least one of the two sets $\{\alpha_3\alpha_1,\; \alpha_4\alpha_2\}$
and $\{\alpha_4\alpha_1,\; \alpha_3\alpha_2\}$ is in $I$.
\end{enumerate}

For convenience sake, we denote the subquivers in (v3) with 4 vertices including the vertex
$i_3$ and its 3 neighbours by $X_{i_3}$, and denote the subquivers in (v4) with 5 vertices including the vertex
$i_4$ and its 4 neighbours by $X_{i_4}$. A full subquiver of $Q$ between two vertices $i$ and $j$ is denoted by
$<i,j>$. For a subquiver $Q'$ of $Q$, we write $I|_{Q'}:=I\cap KQ'$. The following definition
is crucial in the sequel.

\begin{definition}\label{3.3}
For the vertex $i$ in $Q$, we define the {\bf vertex ideal} $J_i$ of $\Lambda$ according to the above classification of vertices as follows.
\begin{enumerate}
\item[(1)] For the sink $i_1$ of type (v1.2), define
$$J_{i_1}=\begin{cases}
0, &\mbox{if there exists $j\in Q_0$ such that $|\{\alpha\in Q_1\mid s(\alpha)=j\}|=2$,}\\
&\mbox{$<j,i_1>$ is linear and $I|_{<j,i_1>}=0$;}\\
\Lambda, &\mbox{if $\Lambda$ is a path algebra with a unique sink $i_1$};\\
I, &\mbox{otherwise.}
\end{cases}$$
\item[(2)] For the sink $i_2$ of type (v2.2), define
$$J_{i_2}=\begin{cases}
0, &\mbox{if there exists $j\in Q_0$ such that $|\{\alpha\in Q_1\mid s(\alpha)=j\}|=2$, }\\
&\mbox{$<j,i_2>$ is linear and $I|_{<j,i_2>}=0$;}\\
\Lambda, &\mbox{if $\Lambda$ is a path algebra with a unique sink $i_2$};\\
I, &\mbox{otherwise.}
\end{cases}$$
\item[(3)] For the vertex $i_3$ of type (v3), define
$$J_{i_3}=
\begin{cases}
0, \begin{cases} &\mbox{(a) if $i_3$ is of type (v3.1); or} \\
&\mbox{(b) if $i_3$ is of type (v3.2) and there exists $j\in Q_0$ such that }\\
&\mbox{\ \ \ \ \ $|\{\alpha\in Q_1\mid s(\alpha)=j\}|=2$, $<j,i_3>$ is linear,
$I|_{<j,i_3>}=0$, }\\
&\mbox{\ \ \ \ \ $I|_{<j,j_1>} \neq 0$ and $I|_{<j,j_2>} \neq 0$;}
\end{cases} \\
I|_{X_{i_3}},\ \ \mbox{otherwise.}
\end{cases}$$
\item[(4)]For the vertex $i_4$ of type (v4), define
$$J_{i_4}=\begin{cases}
0, &\mbox{if there exists $j\in Q_0$ such that $|\{\alpha\in Q_1\mid s(\alpha)=j\}|=2$,}\\
&\mbox{$<j,i_4>$ is linear, $I|_{<j,i_4>}=0$, $I|_{<j,j_3>}\neq0$
and $I|_{<j,j_4>}\neq 0$;}\\
I|_{X_{i_4}},   &\mbox{otherwise.}
\end{cases}$$
\end{enumerate}
\end{definition}

By [13, Corollary 3.2], we have that the minimal right determiner of any irreducible monomorphism is
indecomposable projective. The following lemma gives some criteria for judging when an indecomposable projective
module is the minimal right determiner of an irreducible monomorphism.

\begin{lemma}\label{3.4}
For an irreducible monomorphism $f$ in $\mod \Lambda$, the following statements are equivalent.
\begin{enumerate}
\item[(1)] $P(i)=C(f)$.
\item[(2)] There exists an irreducible monomorphism $f_1:X \rightarrow P(j)$ with $X$ indecomposable
such that $C(f_1)=P(i)=C(f)$.
\item[(3)] $P(i)$ almost factors through $f$.
\item[(4)] $S(i)=\Soc (\Coker f)$.
\end{enumerate}
If one of the above equivalent conditions is satisfied and $j$ is as in (2), then
the subquiver $<j,i>$ is linear and $I|_{<j,i>}=0$.
\end{lemma}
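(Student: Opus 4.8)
The plan is to prove the equivalence of the four conditions by establishing a cycle of implications, then deriving the final geometric consequence. I would exploit the determiner formula in Theorem~\ref{2.4}(2), which says that for an irreducible morphism $f$ we have $C(f)=\tau^{-1}\Ker f\oplus(\oplus P_i)$ where the $P_i$ are the pairwise non-isomorphic indecomposable projectives almost factoring through $f$. Since $f$ is a monomorphism, $\Ker f=0$, so $\tau^{-1}\Ker f=0$ and $C(f)=\oplus P_i$ is a direct sum of projectives almost factoring through $f$. By the cited result [13, Corollary 3.2], $C(f)$ is itself indecomposable projective, so exactly one projective $P(i)$ almost factors through $f$ and $C(f)=P(i)$. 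This immediately yields the equivalence $(1)\Leftrightarrow(3)$, since $P(i)=C(f)$ precisely when $P(i)$ is the unique projective almost factoring through $f$.

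Next I would handle $(1)\Leftrightarrow(4)$. The socle of the cokernel should control which projective serves as the determiner. Concretely, I expect that the minimal right determiner of a monomorphism $f$ is governed by the simple top/socle data of $\Coker f$: the projective cover structure forces a single simple $S(i)$ to sit in the socle of $\Coker f$, and this $S(i)$ is exactly the simple top of the projective $P(i)$ almost factoring through $f$. Using Definition~\ref{2.3}, if $P(i)$ almost factors through $f$ via $h\colon P(i)\to N$ with $\Im h\not\subseteq\Im f$, then the image of $h$ in $\Coker f$ is a nonzero submodule generated by the image of the top of $P(i)$, and in the string-algebra setting its socle is forced to be $S(i)$. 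Conversely, from $S(i)=\Soc(\Coker f)$ one reconstructs the almost-factoring projective. This is where I would lean on the string-combinatorics (the structure of indecomposables as string modules and the shape of irreducible maps) to pin down the socle exactly.

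For the implication $(2)\Rightarrow(1)$ the content is that the determiner is transported along irreducible monomorphisms landing in a projective: if $f_1\colon X\to P(j)$ is an irreducible monomorphism with $C(f_1)=P(i)$, then any $f$ with $C(f)=C(f_1)$ trivially gives $(1)$. The reverse direction $(1)\Rightarrow(2)$ is the existence statement: given $P(i)=C(f)$, I must exhibit some irreducible monomorphism into a projective $P(j)$ whose determiner is again $P(i)$. I would produce this $f_1$ by examining the mesh/AR-structure around $P(j)$ --- taking an irreducible map $X\to P(j)$ into a suitable projective along the string --- and verifying via the formula that it has the same determiner $P(i)$. The existence of the right $P(j)$ is where the tree hypothesis and the classification of vertices (v1)--(v4) enter.

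The final geometric assertion --- that $<j,i>$ is linear with $I|_{<j,i>}=0$ --- is the main obstacle, and I would prove it last using the $j$ produced in $(2)$. The idea is that for $P(i)$ to be the minimal right determiner realized through the monomorphism $f_1\colon X\to P(j)$, the projective $P(j)$ must have $P(i)$ embedded in a way that propagates the top simple $S(i)$ down to a socle constituent, and in a string algebra this propagation happens exactly along a directed linear path from $j$ to $i$ on which no relation is imposed, i.e. $<j,i>$ is linear and $I|_{<j,i>}=0$. I would argue that any branching or any relation on the path $<j,i>$ would either kill the relevant composition (destroying the almost-factorization) or split off the projective (contradicting indecomposability of $C(f)$), forcing linearity and the vanishing of the restricted ideal. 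The delicate point is ruling out relations: I would trace the composite $\rad P(j)\to P(j)\to P(i)$ through the string module combinatorics and show that a nonzero vertex-to-vertex path survives precisely under these two conditions, which is where the careful case analysis against Definition~\ref{3.3} is needed.
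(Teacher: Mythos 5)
Your overall architecture is sound and your $(1)\Leftrightarrow(3)$ step is essentially the paper's: since $f$ is mono, Theorem~\ref{2.4}(2) gives $C(f)=\oplus P_i$, and indecomposability of $C(f)$ (the paper cites [13, Remark 3.3]; you cite [13, Corollary 3.2]) forces $C(f)$ to be the unique projective almost factoring through $f$. The implication $(2)\Rightarrow(1)$ is indeed trivial. But two of the remaining steps are left as intentions rather than arguments, and this is exactly where the content sits. For $(1)\Leftrightarrow(4)$ you write that you ``expect'' the socle of $\Coker f$ to be forced to be $S(i)$ and that you would ``lean on the string-combinatorics''; for $(1)\Rightarrow(2)$ you say you would ``produce $f_1$ by examining the mesh/AR-structure around $P(j)$'' without constructing anything. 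The paper discharges both steps by citing [13, Corollary 3.4(1)] and [13, Theorem 3.5(1)], which are legitimate here because Remark~\ref{3.2} extends results 3.1--3.8 of [13] to this setting. If you do not invoke those citations you must actually carry out the computation of $\Soc(\Coker f)$ for an irreducible monomorphism between string modules and exhibit the monomorphism $f_1$ into a projective with the same determiner; as written, these are gaps.

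For the final assertion you have overcomplicated a one-line argument and, in doing so, introduced a map going the wrong way: the composite $\rad P(j)\to P(j)\to P(i)$ you propose to trace does not exist in this situation. The correct argument is: by the already-established equivalence $(1)\Leftrightarrow(3)$ applied to $f_1\colon X\to P(j)$, the projective $P(i)$ almost factors through $f_1$, so Definition~\ref{2.3} supplies $h\in\Hom_{\Lambda}(P(i),P(j))$ with $\Im h\not\subseteq\Im f_1$; in particular $h\neq 0$, hence $\Hom_{\Lambda}(P(i),P(j))\neq 0$, which for left modules means there is a non-zero path from $j$ to $i$ in $(Q,I)$. Since the underlying graph of $Q$ is a tree, $<j,i>$ is the unique walk between $j$ and $i$, so the existence of a non-zero path along it says precisely that $<j,i>$ is linear and $I|_{<j,i>}=0$. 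No analysis of branching, of relations splitting off summands, or of Definition~\ref{3.3} is needed at this point; that case analysis belongs to Theorem~\ref{3.9}, not to this lemma.
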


\begin{proof}
By [13, Theorem 3.5(1)], we have $(1)\Leftrightarrow(2)$. By Theorem 2.4(2) and [13, Remark 3.3],
we have $(1)\Leftrightarrow(3)$. By [13, Corollary 3.4(1)], we have $(1)\Leftrightarrow(4)$.

If one of the above equivalent conditions is satisfied and $j$ is as in (2),
then $P(i)$ almost factors through $f_1$ and $\Hom_{\Lambda}(P(i), P(j)) \neq 0$, which indicates that
there exists a non-zero path from $j$ to $i$ in $Q$, that is, $<j,i>$
is linear and $I|_{<j,i>}=0$.
\end{proof}

We need some further preparation.

\begin{lemma}\label{3.5}
For a vertex $i\in Q_0$, we have
\begin{enumerate}
\item[(1)] $|\{\alpha\in Q_1\mid s(\alpha)=i\}|=1$ if and only if $\rad P(i)$ is indecomposable. In this case,
for any irreducible monomorphism $X \rightarrow P(i)$ with $X$ indecomposable, we have $X\cong \rad P(i)$.
\item[(2)] $|\{\alpha\in Q_1\mid s(\alpha)=i\}|=2$ if and only if $\rad P(i)=M_i \oplus N_i$ with
$M_i$ and $N_i$ indecomposable. In this case, for any irreducible monomorphism $X \rightarrow P(i)$ with
$X$ indecomposable, there exists an indecomposable module $Y\in\mod \Lambda$, such that $\rad P(i)\cong X\oplus Y$.
\end{enumerate}
\end{lemma}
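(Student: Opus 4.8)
The plan is to prove Lemma 3.5 by relating the structure of the radical of an indecomposable projective module $P(i)$ directly to the number of arrows starting at the vertex $i$, using the string-algebra constraint that at most two arrows start at any vertex.

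Let me sketch the proof of Lemma 3.5.

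\medskip

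First I would recall the explicit description of $P(i)$ for a string algebra. Since $\Lambda = KQ/I$ with $I$ generated by zero relations (monomial relations), each indecomposable projective $P(i)$ has a basis indexed by the nonzero paths starting at $i$, and $\rad P(i)$ is spanned by the nonzero paths of length $\geq 1$ starting at $i$. The top of $\rad P(i)$, modulo its own radical, is then governed precisely by the arrows $\alpha$ with $s(\alpha)=i$: each such arrow $\alpha$ contributes a direct summand of $\rad P(i)$, namely the submodule generated by $\alpha$, which (by the string condition) is itself a string module and hence indecomposable. The special biserial conditions (1)--(3) guarantee there are at most two such arrows and that the corresponding summands are ``independent'' string modules, so $\rad P(i)$ is the direct sum of one indecomposable summand per arrow leaving $i$. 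This structural fact is the engine behind both parts.

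\medskip

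For part (1), the forward direction follows immediately: if there is exactly one arrow $\alpha$ with $s(\alpha)=i$, then $\rad P(i)$ is generated by a single element $\alpha$ and is an indecomposable string module. Conversely, if $\rad P(i)$ were indecomposable but two arrows left $i$, the above decomposition would split $\rad P(i)$ into two nonzero summands (since $i$ is not a sink, at least one arrow leaves it, and we are excluding the two-arrow case), a contradiction; here I would also dispose of the case where $i$ is a sink, in which $\rad P(i)=0$ is not indecomposable, matching $|\{\alpha\mid s(\alpha)=i\}|=0\neq 1$. For the ``In this case'' clause, I would invoke that any irreducible monomorphism $X\to P(i)$ with $X$ indecomposable must have image contained in $\rad P(i)$ (the irreducible maps into an indecomposable projective factor through the radical inclusion), and since $\rad P(i)$ is indecomposable the minimality/irreducibility forces $X\cong\rad P(i)$. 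The analogous statement from the type $\mathbb{A}_n$ setting is covered by Remark 3.2, so I can cite the relevant result from [13] (as subsumed by 3.1--3.8 there) rather than reprove it.

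\medskip

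For part (2), the equivalence is the two-arrow analogue of part (1): two arrows $\alpha,\beta$ leaving $i$ yield $\rad P(i)=M_i\oplus N_i$ with $M_i,N_i$ the indecomposable string modules generated by $\alpha$ and $\beta$, and conversely a two-summand decomposition forces exactly two arrows out of $i$ by the string condition (at most two arrows leave any vertex). For the ``In this case'' clause, given an irreducible monomorphism $X\to P(i)$ with $X$ indecomposable, the image again lies in $\rad P(i)=M_i\oplus N_i$; the key point is that $X$ must be a direct summand of $\rad P(i)$ up to isomorphism complement, i.e.\ $\rad P(i)\cong X\oplus Y$ with $Y$ indecomposable. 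This is exactly the shape of the almost split sequence of type (3.2) whose middle term involves $P(i)$, so I would identify $X$ with one of $M_i,N_i$ and set $Y$ to be the other. I expect the main obstacle to be justifying cleanly that an arbitrary irreducible monomorphism into $P(i)$ realizes $X$ as one of the two canonical radical summands rather than some other indecomposable; this is handled by the general theory of irreducible maps into projectives combined with the biserial structure, and I would lean on the cited results from [13] via Remark 3.2 to keep the argument short.
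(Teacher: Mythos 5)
Your proposal is correct and follows essentially the same route as the paper: the paper declares the first assertions of (1) and (2) ``well known'' and, for the ``in this case'' clauses, cites [4, Lemma V.5.1(b)] to conclude that the image of an irreducible monomorphism into $P(i)$ is a direct summand of $\rad P(i)$ --- exactly the key point you isolate (and which you should state explicitly in part (1) as well, since containment in $\rad P(i)$ alone would not force $X\cong\rad P(i)$). Your explicit path-basis argument showing that each arrow leaving $i$ generates a uniserial, hence indecomposable, summand of $\rad P(i)$ merely fills in the ``well known'' part; the appeal to almost split sequences of type (3.2) and to [13] via Remark 3.2 is not needed.
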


\begin{proof}
The first assertions in (1) and (2) are well known. If $X \rightarrow P(i)$ is an irreducible monomorphism,
then $X$ is isomorphic to a submodule of the unique maximal submodule
$\rad P(i)$ of $P(i)$, and hence isomorphic to a direct summand of $\rad P(i)$ by [4, Lemma V.5.1(b)].
\end{proof}

The following three lemmas are useful.

\begin{lemma}\label{3.6}
Let $i\in Q_0$ with $|\{\alpha\in Q_1\mid s(\alpha)=i\}|=1$. Then $P(i)=C(\rad P(i)\hookrightarrow P(i))$.
\end{lemma}

\begin{proof}
It follows from Lemmas 3.5(1) and 3.4.
\end{proof}

\begin{lemma}\label{3.7}
Let $i\in Q_0$ with $|\{\alpha\in Q_1\mid s(\alpha)=i\}|\neq 1$ and $P(i)\in \Det(\Lambda)$.
\begin{enumerate}
\item[(1)] If $j$ is as in Lemma 3.4(2), then $|\{\alpha\in Q_1\mid s(\alpha)=j\}|=2$.
\item[(2)] If the vertex $i$ is a sink $i_1$ of type (v1.2) (resp. a sink $i_2$ of type (v2.2)),
then $J_{i_1}=0$ (resp. $J_{i_2}=0$).
\end{enumerate}
\end{lemma}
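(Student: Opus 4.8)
The plan is to prove part (1) directly and then derive part (2) from it together with Lemma 3.4 and the description of determiners of epimorphisms.

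For part (1), I would begin with a vertex $j$ as in Lemma 3.4(2): an irreducible monomorphism $f_1\colon X\to P(j)$ with $X$ indecomposable and $C(f_1)=P(i)$. Since an irreducible morphism is proper, $\Im f_1$ is a proper submodule of $P(j)$, so $\rad P(j)\neq 0$ and $|\{\alpha\in Q_1\mid s(\alpha)=j\}|\geq 1$; by the special biserial condition this number is at most $2$, so it remains only to exclude the value $1$. If $|\{\alpha\in Q_1\mid s(\alpha)=j\}|=1$, then $\rad P(j)$ is indecomposable and $X\cong\rad P(j)$ by Lemma 3.5(1). As $\rad P(j)$ is the unique maximal submodule of $P(j)$, the proper submodule $\Im f_1$ lies in $\rad P(j)$, and comparing dimensions forces $\Im f_1=\rad P(j)$; hence $\Coker f_1=S(j)$ and $\Soc(\Coker f_1)=S(j)$. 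Applying the equivalence $(1)\Leftrightarrow(4)$ of Lemma 3.4 to $f_1$ gives $S(i)=\Soc(\Coker f_1)=S(j)$, so $i=j$ and $|\{\alpha\in Q_1\mid s(\alpha)=i\}|=1$, contradicting the hypothesis. Therefore $|\{\alpha\in Q_1\mid s(\alpha)=j\}|=2$.

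For part (2), I would treat the sink $i_1$ of type (v1.2); the case of the sink $i_2$ of type (v2.2) is identical word for word. Here $|\{\alpha\in Q_1\mid s(\alpha)=i_1\}|=0\neq 1$, so the hypothesis is in force, and $P(i_1)\in\Det(\Lambda)$ means $P(i_1)=C(f)$ for some irreducible morphism $f$. First I would argue that $f$ must be a monomorphism: the determiners arising from epic irreducible morphisms are exactly the last terms of almost split sequences of type $(3.1)$ (the displayed equality preceding Lemma 3.4), and such last terms are non-projective, whereas $P(i_1)$ is projective. With $f$ a monomorphism, the equivalence $(1)\Leftrightarrow(2)$ of Lemma 3.4 produces a vertex $j$ as in Lemma 3.4(2), and the final clause of Lemma 3.4 guarantees that $<j,i_1>$ is linear with $I|_{<j,i_1>}=0$. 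Feeding this $j$ into part (1) yields $|\{\alpha\in Q_1\mid s(\alpha)=j\}|=2$. These three properties of $j$ are precisely the conditions in the first branch of Definition 3.3(1), whence $J_{i_1}=0$.

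The main obstacle is the exclusion step inside part (1): identifying $\Im f_1$ with $\rad P(j)$ itself, rather than merely an isomorphic copy, so that $\Coker f_1=S(j)$ can be read off, and then using the socle criterion of Lemma 3.4(4) to convert this into the equality $i=j$. Once part (1) is established, part (2) is essentially bookkeeping, the only non-formal ingredient being the non-projectivity of the last terms of almost split sequences, which rules out the epic case and forces $P(i_1)$ to be the minimal right determiner of a monomorphism.
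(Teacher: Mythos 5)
Your proposal is correct and follows essentially the same route as the paper: part (1) is proved by ruling out $|\{\alpha\in Q_1\mid s(\alpha)=j\}|=0$ (via $\rad P(j)\neq 0$) and $=1$ (where the paper invokes Lemma 3.6 to get $C(f_1)=P(j)\neq P(i)$, while you re-derive the same contradiction through the socle criterion of Lemma 3.4(4), giving $i=j$), and part (2) is exactly the paper's ``by (1) and the definition of vertex ideals,'' which you merely spell out, including the (correct, implicitly used) observation that a projective cannot be the determiner of an irreducible epimorphism.
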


\begin{proof}
(1) Let $i\in Q_0$ with $|\{\alpha\in Q_1\mid s(\alpha)=j\}|\neq 1$ and $P(i)\in \Det(\Lambda)$.
If $j$ is as in Lemma 3.4(2), then the subquiver $<j,i>$ is linear and $I|_{<j,i>}=0$ by Lemma 3.4.
If $|\{\alpha\in Q_1\mid s(\alpha)=j\}|=0$ (that is, $j$ is a sink), then $\rad P(j)=0$, and hence $f_1=0$ by Lemma 3.5(1),
a contradiction. If $|\{\alpha\in Q_1\mid s(\alpha)=j\}|=1$,
then $C(f_1)=P(j)$ by Lemmas 3.5(1) and 3.6. It is clear that $i\neq j$, so we have $C(f_1)\neq P(i)$, also a contradiction.
Consequently we conclude that $|\{\alpha\in Q_1\mid s(\alpha)=j\}|=2$.

(2) If the vertex $i$ is a sink $i_1$ of type (v1.2) (resp. a sink $i_2$ of type (v2.2)),
then $J_{i_1}=0$ (resp. $J_{i_2}=0$) by (1) and the definition of vertex ideals.
\end{proof}

\begin{lemma}\label{3.8}
Let $i\in Q_0$ with $|\{\alpha\in Q_1\mid s(\alpha)=i\}|=2$. Then $P(i)\neq C(f)$
for any irreducible monomorphism $f:X\to P(i)$.
\end{lemma}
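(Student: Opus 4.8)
The plan is to argue by contradiction through the equivalence $(1)\Leftrightarrow(3)$ of Lemma \ref{3.4}. Suppose $P(i)=C(f)$ for some irreducible monomorphism $f\colon X\to P(i)$. Then $P(i)$ almost factors through $f$, so by Definition \ref{2.3} there is an endomorphism $h\colon P(i)\to P(i)$ and a map $g\colon\rad P(i)\to X$ with $f g=h\iota$ (where $\iota\colon\rad P(i)\hookrightarrow P(i)$ is the inclusion) and $\Im h\not\subseteq\Im f$. I would show that no $h$ can satisfy both requirements simultaneously, contradicting the existence of such a diagram and hence forcing $P(i)\neq C(f)$.

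First I would record two structural facts. Since $|\{\alpha\in Q_1\mid s(\alpha)=i\}|=2$, Lemma \ref{3.5}(2) gives $\rad P(i)=M_i\oplus N_i$ with $M_i,N_i$ indecomposable and nonzero, and forces $X$ to be isomorphic to one of the two summands, say $X\cong M_i$. As $f$ is a proper monomorphism, $\Im f\subseteq\rad P(i)$ and $\Im f\cong M_i$, so $\Im f\subsetneq\rad P(i)$ because $N_i\neq 0$. The second, crucial fact is where the tree hypothesis enters: since the underlying graph of $Q$ is a tree there is no oriented cycle through $i$, so the only path from $i$ to $i$ is trivial and $e_i\Lambda e_i=K$. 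Hence $\End_{\Lambda}(P(i))\cong e_i\Lambda e_i=K$, which means every endomorphism of $P(i)$ is a scalar multiple of $\id_{P(i)}$; equivalently, $e_iP(i)$ is one-dimensional and $\rad P(i)$ has no component at the vertex $i$.

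With $h=\lambda\cdot\id_{P(i)}$ for some $\lambda\in K$, the case analysis is immediate. If $\lambda=0$, then $\Im h=0\subseteq\Im f$, contradicting $\Im h\not\subseteq\Im f$. If $\lambda\neq 0$, then $h$ is an automorphism, so $\Im(h\iota)=h(\rad P(i))=\rad P(i)$, while commutativity gives $\Im(h\iota)=\Im(fg)\subseteq\Im f\subsetneq\rad P(i)$, a contradiction. Either way the required diagram cannot exist, so $P(i)$ does not almost factor through $f$, and therefore $P(i)\neq C(f)$ by Lemma \ref{3.4}.

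The main obstacle is the second structural fact, namely pinning down $\End_{\Lambda}(P(i))\cong K$ (equivalently, that $\rad P(i)$ omits the vertex $i$). This is exactly the point at which the tree assumption is indispensable: over a quiver admitting an oriented cycle through $i$, the endomorphism $h$ could be a nonzero non-isomorphism with $\Im h\subseteq\rad P(i)$ yet $\Im h\not\subseteq\Im f$, and the contradiction would collapse. Once this is in place, the remainder is the routine verification via Lemma \ref{3.5}(2) that $\Im f$ is a proper summand of $\rad P(i)$.
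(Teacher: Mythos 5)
Your proof is correct, but it takes a genuinely different route from the paper's. You work through condition (3) of Lemma \ref{3.4} (almost factoring through) and exploit the observation that the tree hypothesis forces $\End_{\Lambda}(P(i))\cong e_i\Lambda e_i\cong K$, so the only candidates for $h$ are scalar multiples of the identity: the zero scalar violates $\Im h\not\subseteq\Im f$, while a nonzero scalar forces $\rad P(i)=\Im(h\iota)=\Im(fg)\subseteq\Im f$, which is impossible since Lemma \ref{3.5}(2) makes $\Im f\cong X$ a proper direct summand, hence a proper submodule, of $\rad P(i)=M_i\oplus N_i$ with both summands nonzero. The paper instead argues through condition (4) of Lemma \ref{3.4}: it writes out parts of the string-module presentations of $S(i)$ and $\Coker f$ around the vertex $i$ and observes that one square of the would-be embedding $S(i)\hookrightarrow\Coker f$ cannot commute, so $S(i)\neq\Soc(\Coker f)$. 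Both arguments are sound; yours avoids any computation with string modules and isolates the two facts actually doing the work ($\End_{\Lambda}(P(i))=K$ and $\Im f\subsetneq\rad P(i)$), which makes it cleaner and somewhat more portable, whereas the paper's version is of a piece with the explicit socle computations it carries out in the proof of Theorem \ref{3.9}. Your closing remark correctly identifies the tree (acyclicity) assumption as the point where your argument, unlike a purely formal one, genuinely uses the standing hypotheses.
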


\begin{proof}
Let $i\in Q_0$ with $|\{\alpha\in Q_1\mid s(\alpha)=i\}|=2$. Suppose $P(i)=C(f)$ for some irreducible monomorphism
$f:X\to P(i)$. It is clear that $f$ can be assumed to be an inclusion. By Lemma 3.5(2), we have that
$\rad P(i)=M_i \oplus N_i$ with $M_i$ and $N_i$ indecomposable and that either $X=M_{i}$ or $X=N_{i}$.
Consider the following diagram:
$$\xymatrix@-12pt{
\cdots \ar@{-}[r] &0\ar@{.>}[d] & \overset{i}{K}\ar[l]\ar[r]\ar@{.>}[d]^{1_K} &0 \ar@{.>}[d] \ar@{-}[r] &\cdots\\
\cdots \ar@{-}[r] &K & K\ar[l]_{1_K}\ar[r] &0 \ar@{-}[r] &\cdots \\}$$
or
$$\xymatrix@-12pt{\cdots \ar@{-}[r] &0\ar@{.>}[d] & \overset{i}{K}\ar[l]\ar[r]\ar@{.>}[d]^{1_K} &0 \ar@{.>}[d] \ar@{-}[r] &\cdots\\
\cdots \ar@{-}[r] &0 & K\ar[l]\ar[r]^{1_K} &K\ar@{-}[r] &\cdots. \\}$$
In either diagram, the above is part of the representation of $S(i)$ around $i$ and the below is part of the representation
of $\Coker f$ around $i$. Notice that neither the left square in the first diagram nor the right
square in the second diagram is commutative, so $S(i)$ is not a submodule of $\Coker f$.
It implies $S(i)\neq\Soc(\Coker f)$, which contradicts Lemma 3.4. The assertion follows.
\end{proof}

The following is a key step toward proving the main result.

\begin{theorem}\label{3.9}
For a vertex $i_k$ in $Q$, $P(i_k)\in \Det(\Lambda)$
if and only if $i_k$ is one of the following types.
\begin{enumerate}
\item[(1)]
\begin{enumerate}
\item[(1.1)] a source $i_1$ of type (v1.1);
\item[(1.2)] a sink $i_1$ of type (v1.2) and $J_{i_1}=0$.
\end{enumerate}
\item[(2)]
\begin{enumerate}
\item[(2.1)] a sink $i_2$ of type (v2.2) and $J_{i_2}=0$;
\item[(2.2)] $i_2$ of type (v2.3).
\end{enumerate}
\item[(3)]
\begin{enumerate}
\item[(3.1)] $i_3$ of type (v3.1);
\item[(3.2)] $i_3$ of type (v3.2) and $J_{i_3}=0$.
\end{enumerate}
\item[(4)] $i_4$ of type (v4) and $J_{i_4}=0$.
\end{enumerate}
\end{theorem}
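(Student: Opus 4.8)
The plan is to prove the equivalence type by type. Since the minimal right determiners of the epic irreducible morphisms have already been accounted for before the theorem, I restrict attention to irreducible monomorphisms, and I organize the argument by the number $|\{\alpha\in Q_1\mid s(\alpha)=i_k\}|$ of arrows leaving $i_k$, which by Lemma 3.5 governs the structure of $\rad P(i_k)$. If this number equals $1$ — the cases (v1.1), (v2.3), (v3.1) — then Lemma 3.6 gives $P(i_k)=C(\rad P(i_k)\hookrightarrow P(i_k))\in\Det(\Lambda)$ with no extra hypothesis, establishing (1.1), (2.2) and (3.1) at once. For the remaining vertices, which have $0$ or $2$ arrows out, Lemma 3.7(1) tells me that any irreducible monomorphism $f_1$ witnessing $C(f_1)=P(i_k)$ must have target $P(j)$ for a vertex $j$ with exactly two arrows out, so everything reduces to understanding such $P(j)$.

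Fix $j$ with out-arrows $\beta_1,\beta_2$. By Lemma 3.5(2), $\rad P(j)=M_j\oplus N_j$ with $M_j,N_j$ indecomposable, and the source of any irreducible monomorphism into $P(j)$ is one of $M_j,N_j$, the morphism being the inclusion. Next I would show that $M_j$ and $N_j$ are the uniserial modules carried by the two maximal nonzero directed paths leaving $j$: string axiom (3) forces a unique nonzero continuation at each vertex reached along such a path, so no branching occurs. Consequently $P(j)/N_j$ (resp. $P(j)/M_j$) is uniserial, supported on $j$ together with the $\beta_1$-branch (resp. $\beta_2$-branch), and its socle is the simple module at the endpoint of that branch. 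With Lemma 3.4(4) and Lemma 3.4(2) this yields the key dichotomy: for a vertex $i_k$ with $\neq 1$ arrows out, $P(i_k)\in\Det(\Lambda)$ if and only if $i_k$ is the endpoint of a maximal nonzero directed path issuing from some vertex with two arrows out.

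It then remains to read this endpoint criterion off each vertex type. A source of type (v2.1) has no incoming arrow, hence cannot be the endpoint of a nonzero directed path, while the possibility $j=i_2$ is excluded by Lemma 3.8; thus $P(i_2)\notin\Det(\Lambda)$, which is why (v2.1) is absent from the list. For a sink of type (v1.2) or (v2.2) the forward implication is precisely Lemma 3.7(2), and conversely, when $J_i=0$ the first clause of Definition 3.3(1) or (2) furnishes a vertex $j$ with two arrows out and a linear zero-relation path $<j,i>$ whose branch terminates at the sink $i$; the irreducible monomorphism whose cokernel retains only $j$ and that branch then has socle $S(i)$, so $P(i)\in\Det(\Lambda)$. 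For types (v3.2) and (v4) I would use that $Q$ is a tree, so $<j,i_k>$ is automatically the unique linear path; the statement that $i_k$ is an endpoint means that both continuations out of $i_k$ are killed by relations, i.e. $\alpha_1\alpha_3,\alpha_2\alpha_3\in I$ (resp. the relevant pair at $i_4$), and these are exactly the requirements $I|_{<j,j_1>}\neq 0$, $I|_{<j,j_2>}\neq 0$ (resp. $I|_{<j,j_3>}\neq 0$, $I|_{<j,j_4>}\neq 0$) appearing in Definition 3.3(3)(b) and 3.3(4). Hence the endpoint criterion holds if and only if $J_{i_k}=0$.

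The step I expect to be the main obstacle is this last translation between the clean endpoint criterion and the intricate case definitions of the vertex ideals. Concretely, one must check that, under $I|_{<j,i_k>}=0$, the condition $I|_{<j,j_\ell>}\neq 0$ is equivalent to the single junction relation lying in $I$; rule out the degenerate case $j=i_k$ (impossible, since then $<j,j_\ell>$ is a single arrow and supports no zero relation, forcing $I|_{<j,j_\ell>}=0$); and confirm the uniseriality of $M_j$ and $N_j$ from the string axioms, so that the cokernel socles computed above are genuinely simple and Lemma 3.4(4) applies.
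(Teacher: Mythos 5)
Your proposal follows essentially the same route as the paper: reduce to irreducible monomorphisms into projectives $P(j)$ with two arrows leaving $j$ (via Lemmas 3.4, 3.5, 3.7 and 3.8), handle the one-out-arrow vertices by Lemma 3.6, compute the socle of the cokernel along the surviving branch, and match the result against Definition 3.3; your ``endpoint of a maximal nonzero directed path'' criterion is just a uniform repackaging of the paper's case-by-case socle computations, and the uniseriality of $M_j$, $N_j$ that you invoke is exactly what makes the paper's ``straightforward'' calculations work. One caveat: your proposed final check, that under $I|_{<j,i_k>}=0$ the condition $I|_{<j,j_\ell>}\neq 0$ is \emph{equivalent} to the single junction relation (e.g.\ $\alpha_\ell\alpha_3$) lying in $I$, is false as stated --- a longer zero relation ending in $\alpha_\ell$ also makes $I|_{<j,j_\ell>}\neq 0$ without the length-two relation belonging to $I$. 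Fortunately this intermediate step is unnecessary: what the argument actually needs, and what is true, is that $I|_{<j,j_\ell>}\neq 0$ is equivalent to the vanishing in $\Lambda$ of the full composite path from $j$ through $i_k$ to $j_\ell$ (any zero relation witnessing $I|_{<j,j_\ell>}\neq 0$ must involve the final arrow, since $I|_{<j,i_k>}=0$, and hence is a subpath of that composite), so your endpoint criterion does translate exactly into the conditions of Definition 3.3 once the junction-relation middleman is dropped.
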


\begin{proof}
(1) If $i_1$ is a source of type (v1.1), then $P(i_1)=C(\rad P(i_1) \hookrightarrow P(i_1))$ by Lemma 3.6.

Let $i_1$ be a sink of type (v1.2). If $J_{i_1}=0$, then there exists $j\in Q_0$ such that $|\{\alpha\in Q_1\mid s(\alpha)=j\}|=2$,
$<j,i_1>$ is linear and $I|_{<j,i_1>}=0$. By Lemma 3.5(2), we have $\rad P(j)=M_j \oplus N_j$ with
$M_j$ and $N_j$ indecomposable. So there exists a subquiver of the Auslander-Reiten quiver of $\mod \Lambda$ as follows.
$$\xymatrix@-20pt{
P(i_1) \ar[dr] \\
& \cdots \ar[dr] \\
& & M_j \ar[dr] \\
& & &P(j)\ar[dr] \\
& & N_j \ar[ur]^{f} & &\cdots\ar[dr] \\
&&&&& \Coker f.}$$
It is straightforward to calculate that $\Soc(\Coker f)=S(i_1)$. So $P(i_1)=C(f)$ by Lemma 3.4.
Conversely, if $P(i_1)\in\Det(\Lambda)$, then $J_{i_1}=0$ by Lemma 3.7(2).

(2) Let $i_2$ be a source of type (v2.1). If $P(i_2)\in\Det(\Lambda)$,
then by Lemmas 3.4 and 3.7(1), there exists an irreducible monomorphism $f_1:X \rightarrow P(j)$ with $X$ indecomposable
such that $P(i_2)=C(f_1)$, $|\{\alpha\in Q_1\mid s(\alpha)=j\}|=2$, the subquiver $<j,i_2>$
is linear and $I|_{<j,i_2>}=0$, and hence $j=i_2$. It contradicts Lemma 3.8. Thus we have $P(i_2) \notin \Det(\Lambda)$.

Let $i_2$ be a sink of type (v2.2). If $J_{i_2}=0$, then there exists $j\in Q_0$ such that $|\{\alpha\in Q_1\mid s(\alpha)=j\}|=2$,
$<j,i_2>$ is linear and $I|_{<j,i_2>}=0$. By Lemma 3.5(2), we have $\rad P(j)=M_j \oplus N_j$ with
$M_j$ and $N_j$ indecomposable. So there exists a subquiver of the Auslander-Reiten quiver of
$\mod \Lambda$ as follows.
$$\xymatrix@-20pt{
& \cdots  \\
P(i_2) \ar[dr] \ar[ur]  \\
& \cdots \ar[dr] \\
& & M_j \ar[dr] \\
& & &P(j)\ar[dr]  \\
& & N_j \ar[ur]^{f} & &\cdots\ar[dr]  \\
&&&&& \Coker f.}$$
It is straightforward to calculate that $\Soc(\Coker f)=S(i_2)$. So $P(i_2)=C(f)$ by Lemma 3.4.
Conversely, if $P(i_2)\in\Det(\Lambda)$, then $J_{i_2}=0$ by Lemma 3.7(2).

If $i_2$ is of type (v2.3), then $P(i_2)=C(\rad P(i_2) \hookrightarrow P(i_2))$ by Lemma 3.6.

(3) If $i_3$ is of type (v3.1), then $P(i_3)=C(\rad P(i_3) \hookrightarrow P(i_3))$ by Lemma 3.6 again.

Let $i_3$ be of type (v3.2). If $J_{i_3}=0$, then there exists $j\in Q_0$ such that
$|\{\alpha\in Q_1\mid s(\alpha)=j\}|=2$, $<j,i_3>$ is linear,
$I|_{<j,i_3>}=0$, $I|_{<j,j_3>} \neq 0$ and $I|_{<j,k_3>} \neq 0$.
By Lemma 3.5(2), we have $\rad P(j)=M_j \oplus N_j$ with $M_j$ and $N_j$ indecomposable.
So there exists a subquiver of the Auslander-Reiten quiver of $\mod \Lambda$ as follows.
$$\xymatrix@-20pt{
  M_{i_3}    \ar[dr]^{g_1}                                      \\
& P(i_3) \ar[dr]                                         \\
  N_{i_3}    \ar[ur]^{g_2} & & \cdots \ar[dr]                    \\
&                &                & M_j \ar[dr]      \\
&                &                &            &P(j)\ar[dr] \\
&                &                & N_j \ar[ur]^{f} && \cdots \ar[dr] \\
&&&&&& \Coker f.
}$$
It is straightforward to calculate that $\Soc(\Coker f)=S(i_3)$. So $P(i_3)=C(f)$ by Lemma 3.4.

Conversely, if $P(i_3) \in \Det(\Lambda)$, then by Lemma 3.4, there exists an irreducible monomorphism $f_1:X \rightarrow P(j)$
with $X$ indecomposable such that $C(f_1)=P(i_3)$, the subquiver $<j,i_3>$
is linear and $I|_{<j,i_3>}=0$. By Lemmas 3.8 and 3.7(1), we have $j\neq i_3$ and
$|\{\alpha\in Q_1\mid s(\alpha)=j\}|=2$. Parts of the representations of $S(i_3)$ and $\Coker f$ around $i_3$ are shown as below.
$$\xymatrix@-15pt{
\overset{j_1}{0}\ar@{.>}@/_7mm/[ddd] \\
& \overset{i_3}{K} \ar[ul]\ar[dl]\ar@{.>}[ddd]^{1_K} & 0\ar[l]\ar@{.>}[ddd] &\cdots\ar[l] &0\ar[l]\ar@{.>}[ddd] &\overset{j}{0}\ar[l]\ar[r]\ar@{.>}[ddd] &\cdots \\
\overset{j_2}{0}\ar@{.>}@/_7mm/[ddd] \\
M_{j_1} \\
& K \ar[ul]\ar[dl]& K\ar[l]&\cdots\ar[l] &K\ar[l]&0\ar[l]\ar[r]&\cdots. \\
M_{j_2} \\}$$
Because $S(i_3)=\Soc(\Coker f)$ by Lemma 3.4, the above diagram is commutative. It implies $M_{j_1}=0=M_{j_2}$.
So $I|_{<j,j_1>} \neq 0$ and $I|_{<j,j_2>} \neq 0$. Thus we have $J_{i_3}=0$ by the definition of vertex ideals.

(4) Let $i_4$ be of type (v4). If $J_{i_4}=0$, then there exists $j\in Q_0$ such that $|\{\alpha\in Q_1\mid s(\alpha)=j\}|=2$,
$<j,i_4>$ is linear, $I|_{<j,i_4>}=0$, $I|_{<j,j_3>}\neq0$
and $I|_{<j,j_4>}\neq 0$. By Lemma 3.5(2), we have $\rad P(j)=M_j \oplus N_j$ with
$M_j$ and $N_j$ indecomposable. So there exists a subquiver of the Auslander-Reiten quiver of $\mod \Lambda$ as follows.
$$\xymatrix@-20pt{
M_{i_4} \ar[dr]& & \cdots \\
& P(i_4) \ar[dr] \ar[ur] \\
N_{i_4} \ar[ur]& & \cdots \ar[dr]  \\
&  &  & M_j \ar[dr] \\
& & &  &P(j)\ar[dr] \\
& &  & N_j \ar[ur]^{f} & &\cdots \ar[dr] \\
&&&&&& \Coker f.
}$$
It is straightforward to calculate that $\Soc(\Coker f)=S(i_4)$. So $P(i_4)=C(f)$ by Lemma 3.4.

Conversely, if $P(i_4) \in \Det(\Lambda)$, then by Lemma 3.4, there exists an irreducible monomorphism $f_1:X \rightarrow P(j)$
with $X$ indecomposable such that $C(f_1)=P(i_4)$, the subquiver $<j,i_4>$
is linear and $I|_{<j,i_4>}=0$. By Lemmas 3.8 and 3.7(1), we have $j\neq i_4$
and $|\{\alpha\in Q_1\mid s(\alpha)=j\}|=2$. Parts of the representations of $S(i_4)$ and $\Coker f$ around $i_4$ are shown as below.
$$\xymatrix@-15pt{0 \\
&\overset{j}{0}\ar@{.>}[ddddddd]\ar[ul]\ar[dr]\\
&&0\ar@{.>}[ddddddd]\ar[dr]\\
&&&\cdots\ar[dr]\\
&&&&\overset{j_1(j_2)}{0}\ar@/_7mm/@{.>}[ddddddd]\ar[dr] &&\overset{j_3}{0}\ar@/^7mm/@{.>}[ddddddd]\\
&&&&&\overset{i_4}{K} \ar@{.>}[ddddddd]^{1_K}\ar[ur]\ar[dr]\\
&&&&\overset{j_2(j_1)}{0}\ar@/_7mm/@{.>}[ddddddd]\ar[ur] &&\overset{j_4}{0}\ar@/^7mm/@{.>}[ddddddd] \\
0 \\
&0\ar[ul]\ar[dr]\\
&&K\ar[dr]\\
&&&\cdots\ar[dr]\\
&&&&K\ar[dr] &&M_{j_3}\\
&&&&&K \ar[ur]\ar[dr]\\
&&&&0\ar[ur] &&M_{j_4}. \\}$$
Because $S(i_4)=\Soc(\Coker f)$ by Lemma 3.4, the above diagram is commutative. It implies $M_{j_3}=0=M_{j_4}$.
So $I|_{<j,j_3>} \neq 0$ and $I|_{<j,j_4>} \neq 0$. Thus we have $J_{i_4}=0$ by the definition of vertex ideals.
\end{proof}

We are now in a position to give the the main result in this paper.

\begin{theorem}\label{3.10}
Set
$$p:=|\{i\mid i\ \text{is a source of type (v2.1)}\}|,$$
$$q:=|\{J_{i_j}\neq 0 \mid 1\leq j\leq 4\}|.$$
Then we have
$$|\Det(\Lambda)|=2n-p-q-1.$$
\end{theorem}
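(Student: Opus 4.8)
The plan is to split $\Det(\Lambda)$ into the contributions coming from epic and from monic irreducible morphisms, count each contribution, and verify that the two are disjoint. First I would recall the two facts already established in this section: by the computation displayed just after Remark 3.2, the minimal right determiners of the epic irreducible morphisms are exactly the $n-1$ last terms $V(\beta)$ of the almost split sequences of type (3.1), and these form a set of cardinality $n-1$; and by [13, Corollary 3.2] the minimal right determiner of any monic irreducible morphism is an indecomposable projective $P(i)$. Since each $V(\beta)$ is the right-hand term of an almost split sequence, it is non-projective, whereas the monic determiners are projective. Hence the epic part and the monic part of $\Det(\Lambda)$ are disjoint, and
$$|\Det(\Lambda)|=(n-1)+N,$$
where $N$ is the number of vertices $i$ with $P(i)\in\Det(\Lambda)$ arising from a monomorphism. (Distinct vertices give non-isomorphic indecomposable projectives, so $N$ is genuinely a count of vertices.)

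The core of the argument is then to show $N=n-p-q$. I would partition the $n$ vertices of $Q$ according to the classification (v1)--(v4) into the eight sub-types and read off from Theorem 3.9 which of them contribute to $N$: every vertex of type (v1.1), (v2.3) or (v3.1) contributes unconditionally; every vertex of type (v1.2), (v2.2), (v3.2) or (v4) contributes precisely when its vertex ideal vanishes; and no vertex of type (v2.1) ever contributes. Writing $a_t$ for the number of vertices of sub-type $t$ and $q_t$ for the number of vertices of sub-type $t$ whose vertex ideal is non-zero, this gives
$$N=a_{1.1}+a_{2.3}+a_{3.1}+(a_{1.2}-q_{1.2})+(a_{2.2}-q_{2.2})+(a_{3.2}-q_{3.2})+(a_4-q_4).$$

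It remains to identify the subtracted terms with $q$ and the omitted term with $p$. By Definition 3.3 the vertex ideal $J_i$ is defined exactly for the sub-types (v1.2), (v2.2), (v3) and (v4); moreover for type (v3.1) one always has $J_{i_3}=0$, so such vertices never contribute to $q$. Hence $q=q_{1.2}+q_{2.2}+q_{3.2}+q_4$, while $p=a_{2.1}$ by definition. Substituting into the displayed expression and using $n=\sum_t a_t$ (summed over all eight sub-types) yields
$$N=\Big(\sum_{t} a_t\Big)-a_{2.1}-q=n-p-q,$$
so that $|\Det(\Lambda)|=(n-1)+(n-p-q)=2n-p-q-1$, as claimed.

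The only delicate point is the bookkeeping in the middle step: one must check that the sub-types for which membership in $\Det(\Lambda)$ is conditional on $J_i=0$ are exactly the sub-types for which $J_i$ is defined and can be non-zero, so that each vertex excluded from $N$ because $J_i\neq0$ is counted once by $q$, and conversely. The potential trap is type (v3.1), which is counted unconditionally in $N$ yet also carries a vertex ideal; the fact (built into Definition 3.3) that this ideal is forced to be $0$ is precisely what prevents it from being double-counted. Once these matchings are in place, the identity $N=n-p-q$ is a direct substitution.
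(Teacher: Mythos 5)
Your proposal is correct and follows essentially the same route as the paper: the epic irreducible morphisms contribute the $n-1$ non-projective determiners via [13, Corollary 3.7], the monic ones contribute $n-p-q$ projectives via Theorem 3.9, and the two sets are disjoint. Your extra bookkeeping over the vertex sub-types (in particular noting that type (v3.1) always has vanishing vertex ideal, so it is not double-counted in $q$) just makes explicit the counting the paper leaves implicit in its appeal to Theorem 3.9.
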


\begin{proof}
By [13, Corollary 3.7], we have that the number of the (non-projective) minimal right determiners of all
irreducible epimorphisms is $n-1$. By Theorem 3.9, we have that the number of the (projective) minimal right
determiners of all irreducible monomorphisms is $n-p-q$. So we have
$$|\Det(\Lambda)|=(n-1)+(n-p-q)=2n-p-q-1.$$
\end{proof}

The following two results show that the distribution of the projective minimal right determiners
can determine the orientation of a quiver in some cases. The first one is a generalization of
[13, Corollary 3.12].

\begin{proposition}\label{3.11}
Assume that there are no vertices of type (v4) in $Q$ and $j\in Q_0$ is a sink of type (v1.2).
Then the following statements are equivalent.
\begin{enumerate}
\item[(1)] The projective minimal right determiners are $\{P(i)\mid 1 \leq i \leq n$ but $i\neq j\}$.
\item[(2)] $j$ is the unique sink in $Q$.
\end{enumerate}
\end{proposition}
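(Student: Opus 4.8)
The plan is to translate the set-theoretic statement (1) into numerical data controlled by Theorems 3.9 and 3.10, and then to recover the orientation of $Q$ from the vertex classification. Write $j$ for the given sink of type (v1.2); then (1) says exactly that $P(i)\in\Det(\Lambda)$ for all $i\neq j$ and $P(j)\notin\Det(\Lambda)$. Since (by the proof of Theorem 3.10) there are $n-p-q$ projective minimal right determiners, (1) forces $n-p-q=n-1$, i.e. $p+q=1$; and since $j$ is of type (v1.2) with $P(j)\notin\Det(\Lambda)$, Theorem 3.9(1.2) gives $J_j\neq0$, so $q\geq1$. Hence (1) is equivalent to the package $p=0$, $q=1$ and $J_j\neq0$. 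The structural input I would isolate first is the identity, valid for any orientation of a tree in which every vertex has out-degree at most two, that the number of sinks equals the number of vertices with two outgoing arrows plus one; this is immediate from $\sum_{u}|\{\alpha\in Q_1\mid s(\alpha)=u\}|=n-1$ by grouping vertices according to out-degree $0,1,2$. In particular, $j$ is the unique sink if and only if $Q$ has no vertex with two outgoing arrows.

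For (2)$\Rightarrow$(1): assuming $j$ is the unique sink, the identity gives that no vertex of $Q$ has two outgoing arrows. Hence there is no vertex of type (v2.1), so $p=0$, and no vertex of type (v3.2) or (v4). The only sinks in $Q$ are of types (v1.2) and (v2.2), so uniqueness of $j$ leaves no (v2.2) vertex and no further (v1.2) vertex. Consequently $J_j$ is the only vertex ideal that can be non-zero, and since no vertex has two outgoing arrows the first clause of Definition 3.3(1) cannot apply; thus $J_j=\Lambda$ (if $\Lambda$ is a path algebra) or $J_j=I$ (otherwise), which is non-zero in either case, giving $q=1$. Finally every $i\neq j$ is of type (v1.1), (v2.3) or (v3.1), each of which lies in $\Det(\Lambda)$ by Theorem 3.9, while $P(j)\notin\Det(\Lambda)$ because $J_j\neq0$; this is precisely (1).

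For (1)$\Rightarrow$(2): by the first paragraph I may assume $p=0$, $q=1$ and $J_j\neq0$, and it suffices to show that $Q$ has no vertex with two outgoing arrows. Suppose otherwise. Using acyclicity and finiteness of $Q$, choose such a vertex $w$ for which no vertex with two outgoing arrows lies strictly upstream (i.e. is joined to $w$ by a non-zero path into $w$); such a $w$ exists. Since $p=0$ excludes type (v2.1) and the hypothesis of the proposition excludes type (v4), $w$ must be of type (v3.2). But the clause realising $J_w=0$ in Definition 3.3(3)(b) requires a vertex with two outgoing arrows strictly upstream of $w$, which our choice of $w$ forbids; hence $J_w=I|_{X_w}$, and this is non-zero since type (v3.2) forces one of $\alpha_1\alpha_3,\alpha_2\alpha_3$ to lie in $I\cap KX_w=I|_{X_w}$. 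As $w\neq j$ (out-degrees $2$ and $0$ respectively), we obtain two distinct non-zero vertex ideals, contradicting $q=1$. Therefore $Q$ has no vertex with two outgoing arrows, and by the identity $j$ is the unique sink.

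The main obstacle is the converse half, specifically the step promoting the numerical condition $q=1$ to the orientation statement (2): a single non-zero vertex ideal does not obviously rule out extra sinks, because an auxiliary forward branch point might have its vertex ideal annihilated by the first clause of Definition 3.3 and so escape detection. The maximal-branch-point device resolves this, since the topmost forward branch point has nothing upstream to annihilate its ideal and must therefore itself contribute a second non-zero vertex ideal. Making this precise rests on the two bookkeeping points flagged above—that such a maximal vertex exists by acyclicity, and that $I|_{X_w}\neq0$ for a genuine (v3.2) vertex—together with the clean dictionary between out-degrees, forward branch points, and the number of sinks in a tree.
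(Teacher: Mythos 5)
Your proof is correct, and the interesting half, $(1)\Rightarrow(2)$, follows a genuinely different route from the paper's. The paper argues directly on the set of determiners: assuming some $i\neq j$ were a sink (or of type (v3.2)), it applies Lemmas 3.4 and 3.8 to produce a vertex $k_1$ of out-degree two upstream of $i$, rules out type (v2.1) via Theorem 3.9, and then iterates to manufacture an infinite chain of (v3.2) vertices, contradicting finiteness of $Q$. You instead compress statement (1) into the numerical package $p=0$, $q=1$, $J_j\neq 0$ via the count $n-p-q$ from the proof of Theorem 3.10, combine it with the tree identity $\#\{\text{sinks}\}=\#\{\text{out-degree-two vertices}\}+1$, and then kill the hypothetical branch point in one step by choosing the topmost out-degree-two vertex $w$ and observing that clause (b) of Definition 3.3(3) cannot annihilate $J_w$, so $J_w=I|_{X_w}\neq 0$ forces $q\geq 2$. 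The two arguments share the same underlying mechanism (an extremal branch point has no upstream vertex to cancel its obstruction), but yours replaces the paper's infinite-ascent contradiction with a single extremal choice and works through the vertex-ideal formalism rather than through Lemma 3.4, which makes it shorter and arguably cleaner; it also explicitly checks $P(j)\notin\Det(\Lambda)$ in the direction $(2)\Rightarrow(1)$, a point the paper leaves implicit. Two small points worth making explicit: in Definition 3.3(3)(b) the witness $j$ cannot equal $w$ itself, since $I|_{<w,j_1>}$ and $I|_{<w,j_2>}$ are intersections of the admissible ideal with single arrows and hence vanish, so the witness really must be strictly upstream as you assert; and the existence of your maximal $w$ should be justified by acyclicity of $Q$ (take $w$ maximal for the partial order given by directed paths), which you do flag.
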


\begin{proof}
$(2)\Rightarrow (1)$ Assume that $j$ is the unique sink in $Q$.
Let $i\in Q_0$ with $i\neq j$. Then $i$ is one of the following types: (v1.1), (v2.3), (v3.1). Now the assertion
follows from Theorem 3.9.

$(1)\Rightarrow (2)$ Assume that the projective minimal right determiners are $\{P(i)\mid 1 \leq i \leq n$ but $i\neq j\}$.
Let $i\in Q_0$ with $i\neq j$. Because $P(i)\in\Det(\Lambda)$ by (1), we have that $i$ is not of type (v2.1)
by Theorem 3.9. So, to show that $i$ is not a sink, it suffices to show that $i$ is not of any one of
the following types: (v1.2), (v2.2), (v3.2). Assume that $i$ is of one of these three types.
It follows from Lemmas 3.4 and 3.8 that
there exists $k_1\in Q_0$ with $k_1\neq i$ such that $|\{\alpha\in Q_1\mid s(\alpha)=k_1\}|=2$ and $<k_1,i>$ is linear.
It is clear that $k_1\neq j$. So $P(k_1)\in \Det(\Lambda)$ and $k_1$ is of type either (v2.1) or (v3.2).

If $k_1$ is of type (v2.1), then by Theorem 3.9, we have $P(k_1)\notin \Det(\Lambda)$, a contradiction.
If $k_1$ is of type (v3.2), then by Lemmas 3.4 and 3.8 again, there exists $k_2\in Q_0$ with $k_2\neq k_1$
such that $|\{\alpha\in Q_1\mid s(\alpha)=k_2\}|=2$ and $<k_1,k_2>$ is linear. It is clear that $k_2\neq j$.
So $P(k_2)\in \Det(\Lambda)$ and $k_2$ is of type either (v2.1) or (v3.2). By the same reason as above,
we have that $k_2$ is of type (v3.2) but not of type (v2.1). Note that the quiver $Q$ is acyclic. So, continuing this process,
we have that there are infinitely many vertices of type (v3.2). It contradicts the fact that $Q$ is finite.

Consequently, we conclude that $j$ is the unique sink in $Q$.
\end{proof}

Similarly, we have the following

\begin{proposition}\label{3.12}
Assume that there are no vertices of type (v4) in $Q$ and $j\in Q_0$ is a sink of type (v2.2).
Then the following statements are equivalent.
\begin{enumerate}
\item[(1)] The projective minimal right determiners are $\{P(i)\mid 1 \leq i \leq n$ but $i\neq j\}$.
\item[(2)] $j$ is the unique sink in $Q$.
\end{enumerate}
\end{proposition}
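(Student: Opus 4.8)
The plan is to mirror the proof of Proposition \ref{3.11} almost verbatim, replacing the sink of type (v1.2) with a sink of type (v2.2) and adjusting the classification of vertices accordingly. The strategy is again to show the two directions separately, with the forward direction $(2)\Rightarrow(1)$ being a straightforward application of Theorem \ref{3.9}, and the reverse direction $(1)\Rightarrow(2)$ proceeding by the same infinite-descent argument that exploits the finiteness and acyclicity of $Q$.

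For $(2)\Rightarrow(1)$, I would assume $j$ is the unique sink of $Q$ and take any vertex $i\neq j$. Since $i$ is not a sink and there are no vertices of type (v4), the vertex $i$ must be of type (v1.1), (v2.1), (v2.3), or (v3.1). Here the only subtle point is type (v2.1): a source with two neighbours of type (v2.1) does \emph{not} give $P(i)\in\Det(\Lambda)$ by Theorem \ref{3.9}(2). However, if $i$ were of type (v2.1) then both arrows at $i$ point outward, and following these arrows away from $i$ while avoiding $j$ one is forced to produce a second sink distinct from $j$ (since $Q$ is a finite tree, each branch terminates at a leaf, and a leaf reached by travelling along arrows is a sink), contradicting uniqueness. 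So in fact no vertex $i\neq j$ can be of type (v2.1) under hypothesis (2), and each such $i$ is of type (v1.1), (v2.3), or (v3.1), whence $P(i)\in\Det(\Lambda)$ by Theorem \ref{3.9}. Together with $P(j)\notin\Det(\Lambda)$ (which holds because $j$ is a sink of type (v2.2), so $P(j)\in\Det(\Lambda)$ would force $J_{i_2}=0$ and produce the required monomorphism, but then the uniqueness of the sink must be checked to fail) this yields (1).

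For $(1)\Rightarrow(2)$, I would argue by contradiction exactly as in Proposition \ref{3.11}. Suppose some vertex $i\neq j$ is a sink; then $i$ is of type (v1.2), (v2.2), or (v3.2), and it is not of type (v2.1) since $P(i)\in\Det(\Lambda)$ by (1) and Theorem \ref{3.9}. Applying Lemmas \ref{3.4} and \ref{3.8}, there is a vertex $k_1\neq i$ with $|\{\alpha\in Q_1\mid s(\alpha)=k_1\}|=2$ and $<k_1,i>$ linear; since $k_1\neq j$ we have $P(k_1)\in\Det(\Lambda)$, forcing $k_1$ to be of type (v2.1) or (v3.2), and the first is excluded by Theorem \ref{3.9}, so $k_1$ is of type (v3.2). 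Iterating produces an infinite chain $k_1,k_2,\dots$ of distinct vertices of type (v3.2), contradicting the finiteness of $Q$ (acyclicity guarantees the $k_m$ are genuinely distinct). Hence no such sink $i\neq j$ exists and $j$ is the unique sink.

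The main obstacle I expect is verifying the $(2)\Rightarrow(1)$ direction cleanly, specifically ruling out type (v2.1) and confirming $P(j)\notin\Det(\Lambda)$ for the sink $j$ of type (v2.2). The type-(v2.1) exclusion requires the tree/acyclicity observation that a source with two outgoing arrows in a finite tree forces an additional sink somewhere downstream; this is intuitively clear but must be stated carefully. The claim $P(j)\notin\Det(\Lambda)$ is the genuinely content-bearing piece: by Theorem \ref{3.9}(2.1), $P(j)\in\Det(\Lambda)$ holds precisely when $J_{i_2}=0$, and one must check that $J_{i_2}=0$ is incompatible with $j$ being the unique sink (because $J_{i_2}=0$ demands a vertex with two outgoing arrows feeding linearly into $j$, which again spawns a competing sink). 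Everything else is a transcription of the earlier proposition.
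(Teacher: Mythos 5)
Your proposal is correct and follows essentially the route the paper intends: Proposition 3.12 is stated without proof precisely because it is the verbatim analogue of Proposition 3.11, and your two directions (classification of the non-sink vertices plus Theorem 3.9 for $(2)\Rightarrow(1)$; the descent through vertices of type (v3.2) via Lemmas 3.4, 3.7 and 3.8 and the finiteness and acyclicity of $Q$ for $(1)\Rightarrow(2)$) reproduce that argument. One small slip: in the direction $(2)\Rightarrow(1)$ your list of possible types for a non-sink vertex $i\neq j$ omits type (v3.2), which is not a sink (it has one incoming and two outgoing arrows) and for which membership of $P(i)$ in $\Det(\Lambda)$ would depend on the vertex ideal; however, such vertices are excluded under hypothesis (2) by exactly the same observation you use for type (v2.1), namely that two outgoing arrows at a vertex of a finite tree force two distinct sinks downstream. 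With that case added, and with your (correct) verification that $J_j\neq 0$, hence $P(j)\notin\Det(\Lambda)$, the proof is complete.
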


The following example illustrates that the assumption ``there are no vertices of type (v4) in $Q$"
is necessary for Propositions 3.11 and 3.12.

\begin{example}\label{3.13}
Let $Q$ be the quiver
$$\xymatrix{
 1  \ar[dr]^{\alpha_1} & & 4 \\
 & 3 \ar[ur]^{\alpha_3} \ar[dr]_{\alpha_4}\\
 2  \ar[ur]_{\alpha_2} & &5  \\
 &&& 6,\ar[ul]^{\alpha_5}
}$$
and let $\Lambda=KQ/I$ such that at least one of the two sets $\{\alpha_3\alpha_1,\; \alpha_4\alpha_2\}$
and $\{\alpha_4\alpha_1,\; \alpha_3\alpha_2\}$ is in the admissible ideal $I$ of $KQ$ (that is, $\Lambda$ is a string algebra).
Then the projective minimal right determiners in $\mod \Lambda$ are $\{P(1),P(2),P(4),P(5),P(6)\}$ by Theorem 3.9.
But the vertex 4 is the unique sink of type (v1.2) and the vertex 5 is the unique sink of type (v2.2) in $Q$.
\end{example}

The following example illustrates that the assumption ``$j\in Q_0$ is a sink of type (v1.2)"
in Proposition 3.11 and the assumption ``$j\in Q_0$ is a sink of type (v2.2)" in Proposition 3.12 are necessary,
and that neither of the source counterparts of these two propositions holds true.

\begin{example}\label{3.14}
\begin{enumerate}
\item[]
\item[(1)] Let $Q$ be the quiver
$$\xymatrix{
1 \ar[r] &2 &3\ar[l]\ar[r] &4 \\
}$$
and $\Lambda=KQ$. Then the projective minimal right determiners in $\mod \Lambda$ are $\{P(1),P(2),P(4)\}$ by Theorem 3.9.

\item[(2)] Let $Q$ be the quiver
$$\xymatrix{
1  \\
& 3 \ar[ul]_{\alpha_1} \ar[dl]^{\alpha_2} & 4\ar[l]_{\alpha_3}\ar[r]^{\alpha_4} &5,\\
2  \\
}$$
and $\Lambda=KQ/I$ a bound quiver algebra. If $I$ is generated by $\{\alpha_1\alpha_3,\alpha_2\alpha_3\}$,
then by Theorem 3.9, the projective minimal right determiners are \linebreak
$\{P(1),P(2),P(3),P(5)\}$. If $I$ is generated
by $\{\alpha_1\alpha_3\}$, then by Theorem 3.9 again, the projective minimal right determiners are
$\{P(1),P(2),P(5)\}$.
\end{enumerate}
\end{example}

\section{Algebras of Dynkin type}

In this section, the quiver $Q$ is of Dynkin type
and $J_{i_1}, J_{i_2}, J_{i_3}$ are as in Definition 3.3. Note that there are no vertex ideals of type $J_{i_4}$ in this case.

It is trivial that if $\Lambda$ is of type $\mathbb{A}_n$, that is, the underlying 
graph of $Q$ is of the form
$$1\frac{\alpha_1}{}2 \frac{\alpha_2}{} 3\frac{\alpha_3}{}\cdots \frac{\alpha_{n-2}}{} n-1\frac{\alpha_{n-1}}{}n,$$
then $\Lambda$ is string. So by Theorem 3.10, we immediately have the following

\begin{corollary} \label{4.1}
If $\Lambda$ of type $\mathbb{A}_n$, then we have
$$|\Det(\Lambda)|=2n-p-q-1,$$
where $p=|\{i\mid i\ \text{is a source in}\ Q\ {\text with}\ 2\leq i\leq n-1 \}|$ and
$q=|\{J_{i_j}\neq 0 \mid j=1,2\}|$.
\end{corollary}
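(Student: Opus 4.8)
The plan is to deduce Corollary 4.1 directly from Theorem 3.10 by matching the quantities $p$ and $q$ appearing there with those in the statement of the corollary. Since the underlying graph of a type $\mathbb{A}_n$ quiver is the linear chain $1 - 2 - \cdots - n$, every vertex has at most two neighbours, so $Q$ contains no vertices of type (v3) or (v4). As already noted at the start of this section, $\Lambda$ is then a string algebra whose underlying graph is a tree, so Theorem 3.10 applies and gives $|\Det(\Lambda)| = 2n - p' - q' - 1$, where $p' = |\{i \mid i \text{ is a source of type (v2.1)}\}|$ and $q' = |\{J_{i_j} \neq 0 \mid 1 \leq j \leq 4\}|$. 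It therefore suffices to verify that $p' = p$ and $q' = q$.

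For the source count, I would observe that a source of type (v2.1) has exactly two neighbours and hence lies in the interior of the chain, that is, $2 \leq i \leq n-1$; conversely, any interior source has two neighbours with both arrows pointing away from it and so is of type (v2.1). The endpoints $1$ and $n$, when they are sources, are of type (v1.1) and are thus excluded from both counts. Hence $p' = |\{i \mid i \text{ is a source with } 2 \leq i \leq n-1\}| = p$.

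For the vertex-ideal count, since $Q$ has no vertices of type (v3) or (v4), the ideals $J_{i_3}$ and $J_{i_4}$ never arise, so the only vertex ideals are $J_{i_1}$ (attached to sinks of type (v1.2)) and $J_{i_2}$ (attached to sinks of type (v2.2)). Consequently $q' = |\{J_{i_j} \neq 0 \mid 1 \leq j \leq 4\}| = |\{J_{i_j} \neq 0 \mid j = 1,2\}| = q$. Substituting $p' = p$ and $q' = q$ into the formula of Theorem 3.10 yields $|\Det(\Lambda)| = 2n - p - q - 1$.

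Being a purely combinatorial reduction, the argument presents no serious difficulty; the only point that requires care is the clean bookkeeping of the vertex types in the $\mathbb{A}_n$ case. In particular, one must confirm that the interior vertices of type (v2.3), which are neither sources nor sinks, are counted in neither $p$ nor $q$, and that endpoint sources of type (v1.1) are excluded from $p$ — and this is precisely what collapses the two bracket-counts of Theorem 3.10 to the simpler ones of the corollary.
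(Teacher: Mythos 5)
Your proposal is correct and follows exactly the route the paper takes: the paper derives Corollary \ref{4.1} immediately from Theorem \ref{3.10} after observing that any algebra of type $\mathbb{A}_n$ is a string algebra on a tree, that interior sources ($2\leq i\leq n-1$) are precisely the sources of type (v2.1), and that the absence of vertices of type (v3) and (v4) leaves only the vertex ideals $J_{i_1}$ and $J_{i_2}$. Your write-up merely makes explicit the bookkeeping the paper treats as immediate, so there is nothing to add.
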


Let $\Lambda$ be of type $\mathbb{A}_n$. Then there are no vertex ideals of type $J_{i_3}$ or $J_{i_4}$.
\begin{enumerate}
\item[(1)] If there is a unique sink in $Q$, then we have the following facts.
\begin{enumerate}
\item[(1.1)] There are no sources $i$ with $2\leq i\leq n-1$.
\item[(1.2)] Either $J_{i_1}\neq 0$ or $J_{i_2}\neq 0$.
\end{enumerate}
So $p=0$ and $q=1$, and hence $|\Det(\Lambda)|=2n-2$ by Corollary 4.1.
\item[(2)] If $\Lambda$ is a path algebra with at least two sinks in $Q$, then $|\{J_{i_1}\neq 0\}|=0=|\{J_{i_2}\neq 0\}|$.
By Corollary 4.1, we have $|\Det(\Lambda)|=2n-p-1$. Thus [13, Theorem 3.13] follows.
\item[(3)] If $\Lambda$ is a bound quiver algebra with at least two sinks in $Q$, then it is straightforward to check
that the notion of vertex ideals is exactly that of sink ideals in [13, Definition 3.14]. By Corollary 4.1,
we have $|\Det(\Lambda)|=2n-p-q-1$. Thus [13, Theorem 3.15] follows.
\end{enumerate}
In conclusion, Corollary 4.1 is a unified version of [13, Theorems 3.13 and 3.15].

If $\Lambda$ is of type $\mathbb{D}_n$, then the underlying 
graph of $Q$ is of the form
$$\xymatrix{
1 \ar@{-}[dr]^{\alpha_1} \\
& 3 \ar@{-}[r]^{\alpha_3} & 4 \ar@{-}[r]^{\alpha_4} & \cdots \ar@{-}[r]^{\alpha_{n-1}} & n. \\
2 \ar@{-}[ur]_{\alpha_2} \\
}$$
If $\Lambda$ is of type $\mathbb{E}_n$ with $6\leq n\leq 8$, then the underlying 
graph of $Q$ is of the form
$$\xymatrix{
& & 6 \ar@{-}[d]^{\alpha_5}\\
1 \ar@{-}[r]^{\alpha_1} & 2 \ar@{-}[r]^{\alpha_2} & 3 \ar@{-}[r]^{\alpha_3} & 4 \ar@{-}[r]^{\alpha_4} & 5,
}$$
$$\xymatrix{
& & 7 \ar@{-}[d]^{\alpha_6}\\
1 \ar@{-}[r]^{\alpha_1} & 2 \ar@{-}[r]^{\alpha_2} & 3 \ar@{-}[r]^{\alpha_3}
& 4 \ar@{-}[r]^{\alpha_4} & 5 \ar@{-}[r]^{\alpha_5} &6,
}$$

or

$$\xymatrix{
& & 8 \ar@{-}[d]^{\alpha_7}\\
1 \ar@{-}[r]^{\alpha_1} & 2 \ar@{-}[r]^{\alpha_2} & 3 \ar@{-}[r]^{\alpha_3} & 4 \ar@{-}[r]^{\alpha_4}
& 5 \ar@{-}[r]^{\alpha_5} &6 \ar@{-}[r]^{\alpha_6} &7.
}$$

In the above four cases, we denote the subquivers in $Q$ with 4 vertices including the vertex
$3$ and its 3 neighbours by $X_{3}$. By the definition of string algebras, we have

\begin{proposition}\label{4.2}
Let $\Lambda$ be of type $\mathbb{D}_n$, $\mathbb{E}_6$, $\mathbb{E}_7$ or $\mathbb{E}_8$.
Then $\Lambda$ is a string algebra if and only if $\Lambda$ is a bound quiver algebra and $I|_{X_3}\neq 0$.
\end{proposition}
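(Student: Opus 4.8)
The plan is to reduce the whole question to the unique vertex of degree three, namely vertex $3$, since in each of the underlying graphs $\mathbb{D}_n,\mathbb{E}_6,\mathbb{E}_7,\mathbb{E}_8$ it is the only vertex with three neighbours while every other vertex has at most two. First I would record the elementary observation that at any vertex other than $3$ the special biserial conditions are automatically met: condition (1) holds because the degree is at most two, and conditions (2) and (3) are vacuous there, since each would require a vertex with at least two arrows entering and at least one leaving (or the reverse), forcing degree at least three. Thus being special biserial is governed entirely by vertex $3$.

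For the forward implication, suppose $\Lambda=KQ/I$ is a string algebra. Applying condition (1) at vertex $3$, together with the fact that $3$ carries three incident arrows, leaves exactly two cases: two arrows end at $3$ and one starts (case (a)), or one arrow ends at $3$ and two start (case (b)). In case (a) I apply condition (2) to the two incoming arrows and the outgoing one, producing a length-two path $j\to 3\to j_3$ lying in $I$; in case (b) I apply condition (3) symmetrically. Either way $I$ contains a path supported entirely on $X_3$, so $I|_{X_3}=I\cap KX_3\neq 0$, and in particular $I\neq 0$, whence $\Lambda$ is a bound quiver algebra.

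For the converse, assume $\Lambda=KQ/I$ is a bound quiver algebra with $I|_{X_3}\neq 0$, so $I$ is admissible, $I\subseteq R^2$ with $R$ the arrow ideal. The key geometric point is that the only paths of length $\ge 2$ living inside $X_3$ are those of the form (arrow into $3$)(arrow out of $3$); consequently $I|_{X_3}\neq 0$ already forces $3$ to possess at least one incoming and one outgoing arrow, so $3$ is neither a source nor a sink and splits as $2$-in$/1$-out or $1$-in$/2$-out, giving condition (1). Separating the two relevant length-two paths through $3$ by the idempotents at their source (resp.\ target) vertices shows that $I|_{X_3}\neq 0$ means one of them actually lies in $I$, which is exactly condition (2) in case (a) and condition (3) in case (b), these being vacuous at every other vertex as already noted. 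Finally, since the underlying graph is a tree there are no two distinct parallel paths, so truncating any $r\in I$ as $e_j r e_i$ isolates a scalar multiple of the unique path from $i$ to $j$; hence $I$ is generated by single paths and condition (4) holds for free. Therefore $\Lambda$ is a string algebra.

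The main obstacle I anticipate is the bookkeeping in the converse, specifically making rigorous that $I|_{X_3}\neq 0$ both forces the correct orientation at vertex $3$ and pins down which length-two relation is present. The cleanest device for this is the idempotent-truncation $e_j r e_i$: because $Q$ is a tree it returns a multiple of the unique path between two vertices, so a nonzero combination in $I|_{X_3}$ cannot conceal the individual relations, while admissibility rules out any length-$\le 1$ contribution. Everything else is the routine case split (a) versus (b) and the verification that conditions (2) and (3) are vacuous away from vertex $3$.
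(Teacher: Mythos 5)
Your proof is correct, and it takes the same route as the paper: the paper states Proposition 4.2 with no written proof beyond ``by the definition of string algebras,'' and your argument is precisely a careful unpacking of that definition, reducing conditions (1)--(3) to the unique degree-three vertex $3$ and using admissibility plus the tree hypothesis (idempotent truncation isolating the unique path between two vertices) to settle condition (4). Nothing is missing; you have simply made explicit what the paper leaves implicit.
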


By Proposition 4.2 and Theorem 3.10, we have the following two corollaries.

\begin{corollary}\label{4.3}
Let $\Lambda$ be a bound quiver algebra of type $\mathbb{D}_n$ with $I|_{X_3}\neq 0$. Then we have
$$|\Det(\Lambda)|=2n-p-q-1,$$
where $p=|\{i\mid i\ \text{is a source in}\ Q \ \text{with}\ 4\leq i\leq n-1\}|$ and
$q=|\{J_{i_j}\neq 0 \mid j=1,2,3\}|$.
\end{corollary}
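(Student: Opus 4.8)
The plan is to reduce Corollary 4.3 to the general counting formula of Theorem 3.10 by matching, in the type $\mathbb{D}_n$ setting, the quantities $p$ and $q$ occurring there with the ones in the present statement. First I would invoke Proposition 4.2: since $\Lambda$ is a bound quiver algebra of type $\mathbb{D}_n$ with $I|_{X_3}\neq 0$, it is a string algebra, and its underlying graph, being $\mathbb{D}_n$, is a tree. Hence Theorem 3.10 applies and delivers $|\Det(\Lambda)|=2n-p-q-1$, where $p$ counts the sources of type (v2.1) and $q$ counts the non-zero vertex ideals among $J_{i_1},J_{i_2},J_{i_3},J_{i_4}$. The remaining work is to check that these two numbers equal the $p$ and $q$ written in the corollary.

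The next step is to classify the vertices of the $\mathbb{D}_n$ quiver by their number of neighbours. The vertices $1$, $2$ and $n$ each have a single neighbour and are therefore of type (v1); the vertices $4,5,\ldots,n-1$ each have exactly two neighbours and are of type (v2); and the vertex $3$, with neighbours $1$, $2$ and $4$, is the unique vertex of type (v3). The decisive observation is that no vertex has four neighbours, so no vertex is of type (v4).

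With this classification, matching $p$ is immediate: a source of type (v2.1) is exactly a source having two neighbours, and in $\mathbb{D}_n$ these are precisely the sources $i$ with $4\leq i\leq n-1$; hence the $p$ of Theorem 3.10 agrees with the $p$ of the statement. For $q$, the absence of type (v4) vertices means the vertex ideal $J_{i_4}$ never arises, so the counting set $\{J_{i_j}\neq 0\mid 1\leq j\leq 4\}$ of Theorem 3.10 collapses to $\{J_{i_j}\neq 0\mid j=1,2,3\}$, which is exactly the $q$ of the corollary. Substituting these two identifications into the formula of Theorem 3.10 yields the claim.

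The argument is essentially bookkeeping, and I expect no serious obstacle. The only point demanding care is the vertex classification itself, namely the verification that in type $\mathbb{D}_n$ the degree-two vertices are precisely those indexed $4,\ldots,n-1$ and that the single branch vertex $3$ has degree three, so that the type-(v4) contribution to $q$ is genuinely absent and the reduction of the counting sets is justified.
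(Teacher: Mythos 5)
Your proposal is correct and follows the paper's own route: the paper derives Corollary 4.3 directly from Proposition 4.2 and Theorem 3.10, and your identification of the degree-two vertices as $4,\ldots,n-1$, of vertex $3$ as the unique type (v3) vertex, and of the absence of type (v4) vertices is exactly the bookkeeping that justifies matching $p$ and $q$. No gaps.
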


\begin{corollary}\label{4.4}
Let $\Lambda$ be a bound quiver algebra of type $\mathbb{E}_n$ with $6\leq n\leq 8$ and $I|_{X_3}\neq 0$. Then we have
$$|\Det(\Lambda)|=2n-p-q-1,$$
where $p=|\{i\mid i\ \text{is a source in}\ Q \ \text{with}\ i\neq 1,3,{n-1},n\}|$ and
$q=|\{J_{i_j}\neq 0 \mid j=1,2,3\}|$.
\end{corollary}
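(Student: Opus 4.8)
The plan is to deduce the corollary directly from Theorem 3.10 and Proposition 4.2, the only real work being to reinterpret the two combinatorial quantities $p$ and $q$ of Theorem 3.10 in terms of the explicit shape of the $\mathbb{E}_n$ quiver. First I would apply Proposition 4.2: the standing hypotheses that $\Lambda$ is a bound quiver algebra of type $\mathbb{E}_n$ with $I|_{X_3}\neq 0$ guarantee that $\Lambda$ is a string algebra, and since the underlying graph of an $\mathbb{E}_n$ quiver is a tree, Theorem 3.10 applies and yields $|\Det(\Lambda)|=2n-p-q-1$ with $p$ and $q$ as defined there.

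Next I would read off the vertex degrees from the three diagrams. In each case the unique trivalent vertex is $3$, the three endpoints are $1$, $n-1$ and $n$, and every other vertex, i.e. each $i$ with $i\neq 1,3,n-1,n$, has exactly two neighbours. Recalling that a vertex of type (v2.1) is precisely a source with two neighbours, and that none of $1,3,n-1,n$ can be of type (v2.1) since their degrees differ from two, the sources of type (v2.1) are exactly the sources $i$ with $i\neq 1,3,n-1,n$. Thus the $p$ of Theorem 3.10 equals the $p=|\{i\mid i\text{ is a source in }Q\text{ with }i\neq 1,3,n-1,n\}|$ of the corollary.

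For $q$ I would note that the maximal degree in an $\mathbb{E}_n$ quiver is three, so $Q$ has no vertex of type (v4) and hence no vertex ideal of type $J_{i_4}$; therefore $q=|\{J_{i_j}\neq 0\mid 1\leq j\leq 4\}|$ collapses to $q=|\{J_{i_j}\neq 0\mid j=1,2,3\}|$, matching the corollary. Substituting these two identifications into $|\Det(\Lambda)|=2n-p-q-1$ gives the stated formula.

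I do not expect a genuine obstacle here, as the corollary is a specialization of Theorem 3.10. The only step requiring care is the degree bookkeeping in the second paragraph: one must confirm, uniformly for $\mathbb{E}_6$, $\mathbb{E}_7$ and $\mathbb{E}_8$, that the degree-two vertices are exactly those with $i\neq 1,3,n-1,n$ and that the four excluded vertices cannot be of type (v2.1).
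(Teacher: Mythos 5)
Your proposal is correct and follows exactly the paper's route: the paper derives Corollary 4.4 by combining Proposition 4.2 (to see that $\Lambda$ is a string algebra) with Theorem 3.10, and your degree bookkeeping identifying the (v2.1) sources with the sources $i\neq 1,3,n-1,n$ and noting the absence of $J_{i_4}$ just makes explicit what the paper leaves implicit. No gaps.
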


\section{An example of non-Dynkin type}

In this section, we give an example of non-Dynkin type to illustrate Theorem 3.10.

\begin{example}\label{5.1}
Let $Q^{(0)}$ be the quiver with a unique vertex but no arrows: $\circ$.
Let $Q^{(1)}$ be the quiver
$$\xymatrix{
 \circ  \ar[dr] & & \circ  \\
 & \circ \ar[ur] \ar[dr]\\
 \circ  \ar[ur] & &\circ,  \\
}$$
and $Q^{(2)}$ the quiver:
$$\xymatrix{
\circ  \ar[dr] & & \circ &  & \circ \ar[dr] & & \circ \\
& \circ \ar[ur] \ar[drdr] &&& &\circ \ar[ur]\ar[dr]  \\
\circ \ar[ur] &&&&& & \circ                          \\
&&  &\circ  \ar[urur]\ar[drdr]                      \\
\circ \ar[dr] &&&&& & \circ                          \\
& \circ \ar[urur] \ar[dr] &&& &\circ \ar[ur]\ar[dr]  \\
\circ  \ar[ur] & & \circ &  & \circ \ar[ur] & & \circ. \\
}$$
We call the following the {\bf first step}: $Q^{(1)}$ is a quiver of type $X_{i_4}$, which is obtained by adding 4 vertices of type (v1.1)
around the unique vertex in $Q^{(0)}$;
and call the following the {\bf second step}:
$Q^{(2)}$ is obtained by adding 3 vertices of type (v1.1) around each vertex of type (v1.1) in $Q^{(1)}$ such that
all the 4 new branches in $Q^{(2)}$ are of type $X_{i_4}$. Inductively, in the {\bf $n$-th step}, $Q^{(n)}$ is obtained from $Q^{(n-1)}$
by adding 3 new vertices of type (v1.1) around each of all $4\times 3^{n-2}$ vertices of type (v1.1) in $Q^{(n-1)}$ such that all the
$4 \times 3^{n-2}$ new branches are of type $X_{i_4}$. In $Q^{(n)}$, the number of vertices is $2\times 3^n-1$.

For any $n\geq 1$, let $\Lambda^{(n)}=KQ^{(n)}/I$ with $I$ the admissible ideal of $KQ^{(n)}$ generated by all the paths of length 2.
Notice that there are no vertices of type (v2) or (v3), so
$|\{J_{i_2}\neq 0\}|=0=|\{J_{i_3}\neq 0\}|$. It is easy to see that $|\{J_{i_1}\neq 0\}|=0$.

If $n\geq 2$, then we have that some $J_{i_4}\neq 0$ if and only if it is the vertex ideal of a vertex of type (v4) in the outermost ring
of $Q^{(n)}$. So $|\{J_{i_4}\neq 0\}|=4\times 3^{n-2}$, and hence by Theorem 3.10, for any $n\geq 2$ we have
$$|\Det(\Lambda^{(n)})|= 2(2\times 3^n-1)-0-4\times 3^{n-2}-1=32\times 3^{n-2}-3,$$
where the number of the projective minimal right determiners is $14\times 3^{n-2}-1$ and the number of the non-projective ones is $2\times 3^n-2$.
Moreover, by Theorem 3.9, we have that $P(i)\notin \Det(\Lambda^{(n)})$ if and only if
$i$ is one of the $4\times 3^{n-2}$ vertices added in the $(n-1)$-th step.

If $n=1$, then $|\{J_{i_4}\neq 0\}|=1$. So by Theorem 3.10, we have
$$|\Det(\Lambda^{(1)})|=2\times 5-0-1-1=8,$$
where both the number of the projective minimal right determiners and the number of the non-projective ones are 4.
By Theorem 3.9, we have that $P(i)\in \Det(\Lambda^{(1)})$ if and only if $i$ is one of the four vertices added in the first step.
\end{example}

\vspace{0.2cm}

{\bf Acknowledgement.} This research was partially supported by NSFC (Grant
No. 11571164) and a Project Funded by the Priority Academic Program Development of Jiangsu Higher Education Institutions.

\end{document}